\newcommand{\ignore}[1]{}
\newcommand{\startClaims}{\setcounter{claim}{0}}
\newtheorem{theorem}{Theorem}
\newtheorem{corollary}[theorem]{Corollary}
\newtheorem{lemma}[theorem]{Lemma}
\newtheorem{problem}{Problem}
\newtheorem{definition}[theorem]{Definition}
\title{A generalization of Hungarian method and Hall's theorem with applications in wireless sensor networks}
     \author{
     \textsc{Drago Bokal}\thanks{Supported in part by the Ministry of Science of Slovenia under the grant P1-0297} \\[0.25em]
     {\small{Faculty of Natural Sciences and Mathematics}} \\[-0.25em]
     {\small{University of Maribor}} \\[-0.25em]
     {\small{Slovenia}} \\[-0.1em]
     {\small\texttt{drago.bokal@uni-mb.si}}
     \and
     \textsc{Bo\v stjan Bre\v sar}$^{\ast}$ \\[0.25em]
     {\small{Faculty of Natural Sciences and Mathematics}} \\[-0.25em]
     {\small{University of Maribor}} \\[-0.25em]
     {\small{Slovenia}} \\[-0.1em]
     {\small\texttt{bostjan.bresar@uni-mb.si}}
     \and
     \textsc{Janja Jerebic}$^{\ast}$ \\[0.25em]
     {\small{Faculty of Natural Sciences and Mathematics}} \\[-0.25em]
     {\small{University of Maribor}} \\[-0.25em]
     {\small{Slovenia}} \\[-0.1em]
     {\small\texttt{janja.jerebic@uni-mb.si}}
     }
\newcommand{\DEF}[1]{{\em #1\/}}
\newenvironment{proof}%
{\noindent{\bf Proof.}\ }%
{\hfill\eopf\par\bigskip}%
\newcommand{\dfnc}[3]{#1:#2\rightarrow #3}
\newcommand{\dset}[2]{\left\{#1 \:|\: #2\right\}}
\newcommand{\lset}[2]{\left\{#1, \ldots, #2\right\}}
\newcommand{\NN}{\mathbb N}
\newcommand{\RR}{\mathbb R}
\def\i4c{{internally-4-connected}}
\def\2cc{{2-crossing-critical}}
\def\m2{{{\cal M}_2}}
\newcommand{\eopf}{\raisebox{0.8ex}{\framebox{}}}
\newenvironment{wlist}
{\vspace{-10pt}
\begin{list}{}
{\setlength{\labelwidth}{15mm}
\setlength{\partopsep}{0pt}
\setlength{\parskip}{0pt}
\setlength{\topsep}{0pt}
\setlength{\itemsep}{0pt}
\setlength{\parsep}{0pt}
\setlength{\labelsep}{10pt}
\setlength{\leftmargin}{15mm}}
\item[]}
{\end{list}
\vspace{2pt}
\smallskip}
\begin{document}

\maketitle

\begin{abstract}
In this paper, we consider various problems concerning
quasi-matchings and semi-matchings in bipartite graphs, which
generalize the classical problem of determining a perfect matching
in bipartite graphs. We prove a vast generalization of Hall's
marriage theorem, and present an algorithm that solves the problem
of determining a lexicographically minimum $g$-quasi-matching (that
is a set $F$ of edges in a bipartite graph such that in one set of
the bipartition every vertex $v$ has at least $g(v)$ incident edges
from $F$, where $g$ is a so-called need mapping, while on the other
side of the bipartition the distribution of degrees with respect to
$F$ is lexicographically minimum). We also present an application in
designing an optimal CDMA-based wireless sensor networks.
\end{abstract}

\bigskip

\noindent {\bf Keywords:} matching, quasi-matching, 
semi-matching, flow, Hungarian method, augmenting path \\
\noindent {\bf AMS subject classification (2000)}: 05C70, 68R10,
05C90

\section{Introduction}

Problems related to matchings and factors belong to the classical
and intensively studied problems in graph theory. We refer to the
monograph of Lov\' asz and Plummer \cite{lp-86} from over 20 years
ago which is still one of the most comprehensive surveys on the
topic. Since the seminal paper of P. Hall \cite{h-35} containing a
characterization of perfect matchings in bipartite graphs, many
generalizations and variations of matchings and factors in
(bipartite) graphs have been considered. Let us mention the concepts
of 2-matchings, weighted matchings and $f$-factors \cite{lp-86}. At
least as much interest has been given to algorithmic issues
related to matchings, where a similarly influential role is played
by the famous max-flow min-cut theorem of Ford and Fulkerson
\cite{fofu-62}, cf. \cite{lp-86}. The research in the area is still
vivid, which is in part due to its applicability. Notably
applications often require special properties and yield different
variants of existing concepts which were not previously covered by
the theory. In this paper, we introduce and study the so-called
$f,g$-quasi-matching as a natural generalization of matchings in
bipartite graphs.

When modeling CDMA-based wireless sensor networks with graphs
\cite{ben,lbpj}, the following routing problem was encountered
(naturally, it can appear in any communication network with similar
features). The topology of the network is given by the nodes (in our
case sensor units) that are able to communicate among each other
with respect to physical limitations and their mutual distance.
There is a special vertex, the sink, represented by a fixed station
with relatively large computational capabilities. In our model, we
assume that nodes are also fixed, and they can also communicate with
the sink, depending on the mentioned limitations. This yields the
initial rooted graph, in which we wish to pass information from
nodes to the root. While nearby nodes communicate directly with the
sink, other (remote) nodes can pass information to the sink by using
other nodes as communication devices. For the purpose of energy
saving and latency, the number of hops from a given node to the
station must be as small as possible. The overall aim is to design a
routing protocol, by which each node in the network transfers
information to the sink as quickly as possible. Translating our
problem to graphs, we wish to find a spanning tree in a given rooted
graph using only edges that connect two different distance-levels
with respect to the root. There are many such trees obtainable by an
ordinary BFS-algorithm, yet they may have vertices with relatively
large degree, which can cause both communication delay and large
energy consumption of these nodes. Since the life-time of the
network depends on its weakest nodes, such situations need to be
avoided. See \cite{ak-04} for more on wireless sensor networks and
their routing protocols. We remark that finding a spanning tree with
the smallest maximum degree in a non-rooted graphs is a rather well
studied problem (see \cite{goemans} and the references therein), yet
it does not have much connection with the problem on rooted graphs.

Our situation can be quickly translated to the following
optimization problem. {\it Given a rooted graph, find a
spanning tree with maximum degree as small as possible.} Another
more general problem follows from the requirement that more than one
path from a node to the sink is needed, either to provide robustness
against possible node failures or to avoid communication delay due to
collisions at more frequent nodes. Hence alternative paths need
to be determined in advance. Then the problem is to {\it find a
spanning subgraph with maximum degree as small as possible in which
each vertex has $k$ neighbors in the neighboring distance level that
is closer to the root}. More generally, if we have a traffic
estimation at the nodes, then the number of neighbors in the lower
level can be assigned to each vertex individually. By concentrating
solely on two neighboring levels, the problem is to find a spanning
subgraph in a bipartite graph such that, in one set of the partition,
the degrees of vertices are prescribed: they can be 1 (derived from
the  original problem), have a fixed degree $k$ (for the so-called
multipath routing), or they can be determined by an arbitrary
function that corresponds to estimated traffic at the nodes. In
the other set of the bipartition, we are either aiming at the
minimization of the largest degree (optimization problem), or we are
also facing some constraints on degrees of vertices (decision
problem). We will address both of these problems.

A variation of the first (and the simplest) of the mentioned
problems was considered in \cite{hllt}, with motivation arising from
some task scheduling. The authors introduced the so-called
semi-matchings which coincide with spanning forests in bipartite
graphs and their objective was the reduction of a certain
cost-function that is connected to the maximum degree of a forest.
We present a solution to the more general problem of determining an
optimal quasi-matchings, where on one side of the bipartition
degrees of vertices with respect to a quasi-matching obey specified
lower bounds, while on the other side not only the maximum degree of
vertices is minimized, but also their degree distribution is
lexicographically minimum. As it turns out, the resulting algorithm
is on-line, in the sense that an increase or decrease of a lower
bound by one in a vertex, after the semi-matching has been built,
requires only one additional step to obtain an optimal semi-matching
of the graph with new bounds.

In the next section, we fix the notation and present the main
problems, expressed in the language of graph theory. In Section
\ref{sc:hungarian}, the Hungarian method is extended to the above
mentioned problem of finding a lexicographically minimum
quasi-matching in a bipartite graph that yields an efficient
algorithm for the original problem. This algorithm is presented as
an off-line algorithm, although it can be interpreted as an on-line
algorithm when only additions of vertices or the increase of the
prescribed lower bounds occur. It is extended in Section
\ref{sc:online} to the case when the prescribed lower bound
decreases (or the vertex is deleted). In Section \ref{sc:hall}, we
consider a decision version of the most general problem that comes
from the above discussion. We prove a characterization of bipartite
graphs that admit a spanning subgraph in which for the degrees of
vertices of one of the sets in the partition arbitrary lower bounds
are imposed, while in the other set of the partition degrees of
vertices with respect to the spanning subgraph need to obey
arbitrarily specified upper bounds. This result is a vast
generalization of the famous Hall's marriage theorem.

\section{Quasi-matchings in bipartite graphs}

This section introduces the terminology used throughout the paper.
We also characterize minimum semi-matchings and establish their
various properties concerning optimality.

\begin{definition}
Let $G=A+B$ be a bipartite graph. Given a positive integer $k$, a
set $F\subseteq E(G)$ is a \DEF{$k$-quasi-matching} of $Y\subseteq
B$, if every element of $Y$ has at least $k$ incident edges from
$F$. A 1-quasi-matching of $Y$ in which every element of $Y$ has
exactly $1$ incident edge from $F$ is called a {\em semi-matching}.
\end{definition}

\begin{definition}
Let $G=A+B$ be a bipartite graph and $g\colon B\rightarrow \NN$ a
mapping. For a vertex $v\in B$ we call $g(v)$ the \DEF{need} of $v$,
and for any $Y\subseteq B$, the \DEF{need} of $Y$ is
$g(Y)=\sum_{v\in Y}{g(v)}$. A set $F\subseteq E(G)$ is a
\DEF{$g$-quasi-matching} of $Y\subseteq B$ if  every element $v$ of
$Y$ has at least $g(v)$ incident edges from $F$. Next, for a mapping
$f\colon A\rightarrow \NN$, and a vertex $u\in A$ we call $f(u)$ the
\DEF{capacity} of $u$, and for any $X\subseteq A$, the
\DEF{capacity} of $X$ is $f(X)=\sum_{u\in X}{f(u)}$. A set
$F\subseteq E(G)$ is an \DEF{$f,g$-quasi-matching} of $A+B$ if every
element $v$ of $Y$ has at least $g(v)$ incident edges from $F$, and
every element $u$ of $X$ has at most $f(u)$ incident edges from $F$.
\end{definition}

Note that a $g$-quasi-matching of $B$ with a constant need function,
$g(v)=k$, for all $v\in B$, is a $k$-quasi-matching of $B$.

\begin{definition}
Let $G=A+B$ be a bipartite graph and $F\subseteq E(G)$. For a vertex
$v\in V(G)$, the $F$-degree of $v$, $d_F(v)$ is the degree of $v$ in
$G[F]$. The \DEF{degree} of $F$ is the maximum degree in $G[F]$ of a
vertex from $A$.
\end{definition}

Note that a matching of $Y\subseteq B$ is a semi-matching of $Y$ with
degree equal to $1$. We are interested in the following two problems.

\begin{problem}
\label{pb:minDeg} Given a bipartite graph $G=A+B$ and a need
function $g$ on $B$, find a $g$-quasi-matching of $B$ with minimum
degree.
\end{problem}

\begin{problem}
\label{pb:minDegGen} Given a bipartite graph $G=A+B$, is there an
$f,g$-quasi-matching of $A+B$?
\end{problem}

We solve the first problem by generalizing Hungarian method in
Section 3 and the second one by giving a characterization that
generalizes Hall's theorem in Section \ref{sc:hall}.

\begin{definition}
Let $G=A+B$ be a bipartite graph, let $F\subseteq E(G)$, and
$X\subseteq A$. Let $d_F(X)$ be the sequence $d_1,d_2,\ldots,
d_{|X|}$ of $F$-degrees of vertices from $X$, where $d_1\geq d_2
\geq \cdots \geq d_{|X|}$. For $Y\subseteq B$, we define
$d_F(Y)=d_F(N(Y))$.
\end{definition}

The following definition applies to all types of quasi-matchings
(integer, $g$-quasi-matchings and $f,g$-quasi-matchings).

\begin{definition}
Let $G=A+B$ be a bipartite graph, let $F,F'$ be two quasi-matchings
of $Y\subseteq B$. Then $F$ is (lexicographically) greater than
$F'$, if $d_F(Y)$ is lexicographically greater than $d_{F'}(Y)$. A
quasi-matching $F$ of $Y\subseteq B$ that is not greater than any
other quasi-matching of $Y$ is a \DEF{minimum} quasi-matching of
$Y$.
\end{definition}

Clearly, a minimum quasi-matching of $B$ has a minimum degree. It is
also easy to see that in a minimum $g$-quasi-matching all vertices
in $B$ have $F$-degree equal to their need. Thus, to solve Problem
\ref{pb:minDeg}, we propose

\begin{problem}
\label{pb:minimum} Given a bipartite graph $G=A+B$ and a need
function $g\colon B\rightarrow \NN$, find a (lexicographically)
minimum $g$-quasi-matching of $B$.
\end{problem}

An on-line algorithm for solving Problem \ref{pb:minimum} is one of
the major contributions of this paper. We start with the following
easy lemma. (Recall that the {\em pigeonhole or Dirichlet principle}
states that given a set of $t$ objects that are placed into boxes,
and there are $s$ boxes available, then there will be a box
containing at least $\lceil \frac {t}{s}\rceil$ objects.)

\begin{lemma}
\label{Lgolob} Let $G=A+B$ be a bipartite graph, $g\colon
B\rightarrow \NN$ a need function, and $F$ a $g$-quasi-matching of
$B$. Let $X\subseteq A$, with $|X|=k$, and let $Y=N(X)$ be the set
of their neighbors. Let $t$ be the number of edges with one
end-vertex from $Y$ and the other from $A-X$, and let $g(Y)=t+dk+r$,
where $0\le r<k$ and $d\geq 0$. Then $d_F(X)$ is lexicographically
greater or equal to the distribution with $r$ integers $d+1$ and
$k-r$ integers $d$.
\end{lemma}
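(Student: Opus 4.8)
The plan is to combine a double-counting estimate on the edges of $F$ around $Y$ with an elementary fact about the lexicographically smallest way to distribute a fixed number of edges among the $k$ vertices of $X$. The key quantity is $\sum_{u\in X}d_F(u)$, and once it is bounded below the lemma reduces to a statement about integer sequences.

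First I would bound $\sum_{u\in X}d_F(u)$ from below. Since $F$ is a $g$-quasi-matching of $B$, every vertex $v\in Y$ satisfies $d_F(v)\ge g(v)$, so the number of edges of $F$ incident with $Y$ is at least $g(Y)$. Each such edge joins $Y$ to $A$, and at most $t$ of them can run from $Y$ to $A-X$, because $t$ already counts \emph{all} edges of $G$ between $Y$ and $A-X$, hence in particular all those belonging to $F$. Consequently at least $g(Y)-t=dk+r$ edges of $F$ join $Y$ to $X$. As $Y=N(X)$, every edge of $F$ incident with $X$ has its other end in $Y$, so $\sum_{u\in X}d_F(u)$ equals the number of edges of $F$ between $X$ and $Y$, whence $\sum_{u\in X}d_F(u)\ge dk+r$.

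It then remains to prove the purely combinatorial statement: among all non-increasing sequences of $k$ non-negative integers whose sum is at least $dk+r$, the balanced sequence $b$ consisting of $r$ copies of $d+1$ followed by $k-r$ copies of $d$ (whose sum is exactly $r(d+1)+(k-r)d=dk+r$) is lexicographically smallest. Writing $d_F(X)=(d_1,\ldots,d_k)$, I would argue by contradiction: suppose $d_F(X)$ were lexicographically smaller than $b$, and let $j$ be the first coordinate where they differ, so $d_i=b_i$ for $i<j$ and $d_j<b_j$. Since the sequence is non-increasing, $d_i\le d_j<b_j$ for all $i\ge j$; a short case split on whether $b_j=d+1$ (that is $j\le r$) or $b_j=d$ (that is $j>r$) shows in both cases that $d_i\le b_i$ for every $i\ge j$, with strict inequality at $i=j$. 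Combined with $d_i=b_i$ for $i<j$, summing forces $\sum_i d_i<\sum_i b_i=dk+r$, contradicting the lower bound from the previous paragraph. The pigeonhole principle is precisely what yields the base comparison $d_1\ge\lceil(dk+r)/k\rceil$, which matches the top entry of $b$.

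I expect the double-counting to be routine; the delicate point is keeping the inequality strict at coordinate $j$ in the lexicographic comparison, so that the contradiction with the sum is genuine, and checking that both regimes $r=0$ and $r>0$ are covered by the case analysis. The main conceptual step is recognizing that the balanced distribution is the lexicographic minimum among all distributions with the prescribed sum, after which the edge-counting bound immediately delivers the claimed inequality.
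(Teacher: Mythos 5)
Your proof is correct, and its first half --- double counting to get $\sum_{u\in X}d_F(u)\ge g(Y)-t=dk+r$, using $Y=N(X)$ so that every $F$-edge at $X$ lands in $Y$ and at most $t$ of the $F$-edges at $Y$ escape to $A-X$ --- is the same skeleton as the paper's. Where you diverge is the second half. The paper first normalizes: it assumes without loss of generality that all $t$ edges between $Y$ and $A-X$ lie in $F$, writes the sum as exactly $dk+r$, and then invokes a pigeonhole dichotomy (either some vertex of $X$ has $F$-degree exceeding $d+1$, or there are $r$ vertices of degree $d+1$ and $k-r$ of degree $d$). You instead keep the inequality $\sum_i d_i\ge dk+r$ and prove the clean general fact that the balanced sequence is lexicographically minimum among non-increasing non-negative integer sequences with that sum, via the first-differing-coordinate contradiction. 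Your route is not only valid but strictly more complete: the paper's dichotomy, read literally, omits distributions whose maximum equals $d+1$ but which are not balanced --- for instance with $k=3$, $d=1$, $r=1$ the distribution $(2,2,0)$ has sum $4=dk+r$ and maximum $d+1$, yet falls into neither of the paper's two cases; it is of course still lex-greater than $(2,1,1)$, which is exactly what your first-discrepancy argument certifies uniformly, with the strictness at coordinate $j$ correctly preserved and both regimes $r=0$ and $r>0$ covered by the case split on $b_j$. So the paper buys brevity at the cost of a slightly leaky case analysis, while your version buys airtightness and also dispenses with the normalization step. One small nit: your closing remark that pigeonhole yields $d_1\ge\lceil (dk+r)/k\rceil$ is true but redundant, since the comparison at the first differing coordinate already does all the work; you could delete it without loss.
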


\begin{proof}
Note that $d_F(X)$ is (lexicographically) the smallest only if all
edges with one end-vertex from $Y$ and the the other from $A-X$ are
in $F$. We may thus assume without loss of generality that this is
the case. Hence $\sum_{x\in X} d_F(x)=dk+r$.

If $r=0$, then either $d_F(X)$ consists of precisely $k$ integers
$d$ or $d_F(x)$ contains at least one integer strictly greater than
$d$. Both distributions are lexicographically greater or equal to
the distribution with $k$ integers $d$.

So suppose $r>0$ implying $k>r\ge 1$. By applying Dirichlet's
principle, either $X$ contains a vertex $a$ with $d_F(a)> d+1\ge 1$
(in which case $d_F(X)$ is lexicographically greater than the
distribution with the largest degree $d+1$) or there are $r$
vertices in $X$ with $F$-degree $d+1$ and $k-r$ vertices in $X$ with
$F$-degree $d$. The claim follows.
\end{proof}


\begin{definition}
Let $G=A+B$ be a bipartite graph and $F\subseteq E(G)$ a set of
edges. A \DEF{(forward) $F$-alternating path} from a vertex $a\in A$
to a vertex $a'\in A$ in $G$ is a path $P$ such that every internal
vertex of $P$ is in $P$ incident with one edge in $F$ and another
not in $F$, and that $a$ is in $P$ incident with $F$, but $a'$ is
not. A path $P$ from a vertex $a\in A$ to a vertex $a'\in A$ in $G$
is a \DEF{backward $F$-alternating path} if the reversed path on the
same edges from $a'$ to $a$ is a (forward) $F$-alternating path. An
\DEF{$F$-augmenting path} $P$ in $G$ is a path from a vertex $b\in
B$ to a vertex $a\in A$, such that $P-b$ is an $F$-alternating path
from $a'$ to $a$, and the edge $a'b$ is not in $F$.
\end{definition}

Note that by performing $F$-exchange $F'=F\oplus E(P)$ of edges in
an $F$-alternating path $P$ from $a\in A$ to $a'\in A$, the degree
of $a$ decreases by one ($d_{F'}(a)=d_F(a)-1$), the degree of $a'$
increases by one ($d_{F'}(a')=d_F(a)+1$), and all other
quasi-matching-degrees remain as in $F$.

\begin{definition}
Let $G=A+B$ be a bipartite graph, $F$ a quasi-matching of
$Y\subseteq B$ and $P$ an $F$-alternating path from $a\in A$ to
$a'\in A$. The \DEF{decline} of $P$ is $dc(P)=d_F(a)-d_F(a')$.
\end{definition}

\begin{definition}
Let $G=A+B$ be a bipartite graph, $F \subseteq E(G)$, and $a\in A$.
The \DEF{$a$-section} of $G$ is a maximal subgraph $G_a=X_a+Y_a\subseteq G$,
such that there is an $F$-alternating path $P_{a'}$ from $a$ to every
$a'\in X_a$ and $Y_a=N_F(X_a)$ is the set of $F$-neighbors of $X_a$.
Furthermore, $F_a$ is the set of edges in $F$ incident with $X_a$.
\end{definition}

Thus defined $a$-sections play a crucial role in our proof of the
following characterization of minimum $g$-quasi-matchings.


\begin{theorem}
\label{th:declinenov} Let $G=A+B$ be a bipartite graph, $g\colon
B\rightarrow \NN$ a need function and $F$ a $g$-quasi-matching of
$B$. Then $F$ is a minimum $g$-quasi-matching of $B$ if and only if
any $F$-alternating path in $G$ has decline at most $1$.
\end{theorem}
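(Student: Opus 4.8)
The plan is to prove the two implications separately; the forward one is routine and the converse carries the real content. For the forward direction I would argue contrapositively: given an $F$-alternating path $P$ from $a$ to $a'$ with $dc(P)=d_F(a)-d_F(a')\ge 2$, perform the $F$-exchange $F'=F\oplus E(P)$. By the observation recorded after the definition of alternating paths, $F'$ is again a $g$-quasi-matching (every vertex of $B$ retains its $F$-degree) and the only changes on the $A$-side are $d_{F'}(a)=d_F(a)-1$ and $d_{F'}(a')=d_F(a')+1$. Since $d_F(a)\ge d_F(a')+2$, replacing $\{d_F(a),d_F(a')\}$ by $\{d_F(a)-1,d_F(a')+1\}$ is a strict smoothing of the sorted degree sequence: the largest affected value $d_F(a)$ drops while nothing above it moves, so $d_{F'}(B)$ is lexicographically smaller than $d_F(B)$. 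Hence $F$ is not minimum, which is the required contrapositive.

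For the converse I assume every $F$-alternating path has decline at most $1$ and prove $F$ is minimum by induction on $|A|$, using the $a$-section of a vertex of largest degree as the inductive handle. Let $D$ be the degree of $F$, choose $a^*$ with $d_F(a^*)=D$, and form its section $G_{a^*}=X+Y$ with $Y=N_F(X)$. Two structural facts should hold. First, every $x\in X$ is joined to $a^*$ by an $F$-alternating path, so the decline bound gives $D-d_F(x)\le 1$ and hence $d_F(x)\in\{D-1,D\}$; thus $X$ carries only the two top degree values. Second, every edge between $Y$ and $A\setminus X$ lies in $F$: a non-$F$ edge $ya''$ with $y\in Y$ and $a''\notin X$ could be appended, after the $F$-edge from $y$ into $X$, to an alternating path reaching $y$, certifying $a''\in X$ and contradicting $a''\notin X$.

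With these facts I would apply Lemma \ref{Lgolob} to $X$ together with $N(X)$ and the number $t$ of edges from $N(X)$ to $A\setminus X$. Using the identity that the sum of the $F$-degrees over $N(X)$ equals the number of $F$-edges into $X$ plus the number of $F$-edges from $N(X)$ to $A\setminus X$, the two facts above let me evaluate $\sum_{x\in X}d_F(x)$ and check that it equals the total $dk+r$ of the balanced distribution of Lemma \ref{Lgolob}; combined with the first fact ($X$ uses only degrees $D$ and $D-1$) this pins $d_F(X)$ to exactly that balanced distribution. Because Lemma \ref{Lgolob} bounds $d_{F'}(X)$ from below for every $g$-quasi-matching $F'$, the sequence $d_F(X)$ is optimal on $X$. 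I would then delete $X$ together with the incident edge set $F_{a^*}$, decrease the need of each $y\in Y$ by the number of its $F$-edges into $X$, and note that $F\setminus F_{a^*}$ is a quasi-matching of the smaller instance whose alternating paths are alternating paths of $G$ and therefore still have decline at most $1$; the inductive hypothesis then finishes the argument.

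The step I expect to be the genuine obstacle is the tightness claim, namely that $\sum_{x\in X}d_F(x)$ equals exactly $g(N(X))-t$. This requires two things beyond the decline hypothesis: that no vertex of $N(X)$ carries surplus, i.e.\ $d_F(w)=g(w)$, and that all edges from $N(X)$ to $A\setminus X$ (not only those out of $Y$) belong to $F$. The surplus-freeness is exactly the property that the decline condition does not by itself control — a surplus edge at a vertex of $B$ lowers the distribution without creating any decline-$2$ path — so it must be brought in through the fact, already noted before the theorem, that a minimum $g$-quasi-matching satisfies $d(v)=g(v)$; concretely I would first reduce to comparing $F$ against surplus-free (in particular minimum) quasi-matchings and verify that $F$ itself may be taken surplus-free. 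Handling the vertices of $N(X)\setminus Y$ and making this reduction precise is where the bookkeeping concentrates, and it is the part of the proof I would write most carefully.
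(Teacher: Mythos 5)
Your forward direction and the overall skeleton of your converse (pick a vertex of largest degree, form its $a$-section $X+Y$, confine the degrees on $X$ to $\{D-1,D\}$, saturate the edges from $Y$ to $A\setminus X$, pin $d_F(X)$ via Lemma \ref{Lgolob}, then reduce the instance and induct) coincide with the paper's proof; the paper inducts on $g(B)$ rather than $|A|$, but the reduction is the same and that difference is immaterial. The genuine gap is the sentence ``the inductive hypothesis then finishes the argument.'' It does not. Minimality of $F\setminus F_{a^*}$ for the reduced need function $g'$ (which is defined from $F$), together with optimality of $d_F(X)$, does not yield global minimality, because a competing minimum quasi-matching $Q$ need not induce the \emph{same} reduced problem: Lemma \ref{Lgolob} only forces $Q$ to have at least $r$ vertices of $Q$-degree $d$ in $X$, and if it has $p>r$ of them, its edges incident with $X$ absorb $p-r$ extra units of the need of $Y$, so the restriction $Q'$ of $Q$ to $A\setminus X$ is a quasi-matching for a strictly smaller need $g''$ with $g''(Y')=g'(Y')-(p-r)$, and $d_{Q'}(A\setminus X)$ can then be lexicographically \emph{smaller} than $d_{F'}(A\setminus X)$. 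Your componentwise comparison (optimal on $X$, optimal on $A\setminus X$, merge) therefore has no hypothesis to stand on for the second component; what must be shown is that $Q$'s gain on $A\setminus X$ cannot outweigh its loss on $X$ once the two sorted sequences are merged. This cross-term is exactly what the second half of the paper's proof handles: the $p-r$ bookkeeping of \eqref{enacba}, the $(p-r)$-step augmenting-path procedure that lifts $Q'$ back to a $g'$-quasi-matching while preserving the decline bound (so that induction applies to it and identifies its distribution with $d_{F'}(A\setminus X)$), and the final count showing that at best $d_Q(A)=d_F(A)$. Without some version of this argument, your inductive step fails precisely on the competitors that matter.

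Your closing worry about surplus-freeness is well placed, and it is in fact a tacit assumption of the paper's own proof (the identity $g(Y)=t+k(d-1)+r$ uses $d_F(y)=g(y)$ on $Y$, which the decline hypothesis does not supply). But your proposed fix --- ``verify that $F$ itself may be taken surplus-free'' --- cannot succeed: take $A=\{a_1,a_2\}$, $B=\{b\}$, both edges present, $g(b)=1$, and $F$ consisting of both edges. There are no $F$-alternating paths at all (both edges at $b$ lie in $F$), so the decline condition holds vacuously, yet $F$ is not minimum, since its distribution $(1,1)$ exceeds $(1,0)$. So the theorem's ``if'' direction genuinely needs the standing convention $d_F(v)=g(v)$ for all $v\in B$ as an added hypothesis; it is not recoverable by a reduction from the decline condition alone. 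Identifying this is to your credit, but it should be stated as an assumption, not deferred as bookkeeping.
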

\begin{proof}
Suppose there is an $F$-alternating path $P$ in $G$ whose decline is
at least two. By performing an $F$-exchange of edges on $P$, we get
a $g$-quasi-matching $F'$, such that $F$ is lexicographically
greater than $F'$, a contradiction.

The converse is by induction on $g(B)=\sum_{y\in B}g(y)$. Assume
that all $F$-alternating paths in $G$ have decline at most 1. Let
$a\in A$ be a vertex with the largest $F$-degree in $G$, and let
$H=X+Y$ be the $a$-section in $G$. Note that for any $a'\in X$,
$$d_F(a)-1 \leq d_F(a')\leq d_F(a).$$

Also note that by definition of the $a$-section (maximality), any
edge connecting a vertex from $Y$ to a vertex from $A-X$ is in $F$.
Let $t$ be the number of edges connecting a vertex from $Y$ to a
vertex from $A-X$. Then by letting $|X|=k$ and $d=d_F(a)$, we easily
infer that $g(Y)=t+k(d-1)+r$, where $r$ is the number of vertices in
$X$ with $F$-degree equal to $d$. By Lemma \ref{Lgolob}, the
distribution $d_F(X)$ coincides with the lexicographically minimum
degree distribution of a $g$-quasi-matching. Hence, if $X=A$ (and so
$t=0$), the proof is complete.

Thus, suppose that $X\neq A$. Let $Y'=N(A-X)$, and note that $Y\cup
Y'=B$, while $Y\cap Y'$ may be nonempty. Let $F'$ be the restriction
of $F$ to the edges with one endvertex in $A-X$, and set $F''=F-F'$
(i.e. $F''$ contains edges from $F$ that have one endvertex in $X$).
We set a need mapping $g'$ of $Y'$ with $g'(v)=g(v)-d_{F''}(v)$ for
any $v\in Y'$. Now, any $F'$-alternating path in $(X-A)+Y'$ has
decline at most one because $F'$ is just the restriction of $F$. As
$g'(Y')<g(B)$ we infer by induction hypothesis that $F'$ is a
(lexicographically) minimum $g'$-quasi-matching of $Y'$.

Let $Q$ be a minimum $g$-quasi-matching. Hence $d_Q(A)$ is not
greater than $d_F(A)$. In addition we infer by Lemma \ref{Lgolob}
that the distribution $d_Q(X)$ is at least $d_F(X)$, that is, there
is at least $r$ vertices from $X$ whose $Q$-degree is $d$. Let $p$,
$p\geq r$ be the number of vertices in $X$ whose $Q$-degree is $d$.
Denote by $Q''$ the set of edges from $Q$ that have one endvertex in
$X$, and let $Q'=Q-Q''$. Now we introduce a need mapping $g''$ on
$Y'$ by setting $g''(v)=g(v)-d_{Q''}(v)$ for any $v\in Y'$.  Note
that $g''(Y')=g'(Y')-(p-r)$, and so

\begin{equation}
\label{enacba}
 \sum_{u\in A-X}{d_{Q'}(u)}=\sum_{u\in A-X}{d_{F'}(u)}-(p-r).
\end{equation}

Note also that $g'(u)\geq g''(u)$ for any $u\in Y'$. Since $Q'$ is
clearly a minimum $g''$-quasi-matching of $(A-X)+Y'$ we infer (again
by induction hypothesis) that it has no alternating paths with
decline more than 1.

We gradually increase the $g''$-quasi-matching $Q'$ of $(A-X)+Y'$ to
a $g'$-quasi-matching by using the following procedure that consists
of $p-r$ steps. We denote by $Q_i$ the quasi-matching in the $i$-th
step of the procedure (and set $Q_0=Q'$). In each step we obtain
$Q_i$ from $Q_{i-1}$ by taking a vertex $u\in Y'$ with
$g''(u)<g'(u)$, for which $d_{Q_{i-1}}(u)<g'(u)$. Let $P$ be an
augmenting path from $u$ to a vertex $a_i$ of smallest possible
$Q_{i-1}$-degree in $A-X$. Then we set $Q_i=Q_{i-1}\oplus E(P)$.
Note that all vertices from $A-X$ on $P$ have degree $d_{Q_i}(a_i)$
because $P-u$ is a forward $Q_{i-1}$-alternating path, having
decline exactly 1 (unless $a_i$ is already a neighbor of $u$). From
this we quickly infer that there are no $Q_i$-alternating path with
decline more than 1, provided there were no such
$Q_{i-1}$-alternating paths. In the last step we get a
$g'$-quasi-matching $Q_{p-r}$ which thus has no alternating paths
with decline more than 1. By induction hypothesis $Q_{p-r}$ is a
minimum $g'$-quasi-matching of $(A-X)+Y'$ hence its degree
distribution in $A-X$ coincides with $d_{F'}(A-X)$.

From \eqref{enacba} we find that $d_{Q'}(A-X)$ is the smallest
possible (noting that it can be obtained from $d_{F'}(A-X)$ by
taking off $p-r$ units from vertex degrees in $A-X$) if there are
exactly $p-r$ vertices in $A-X$ with $Q'$-degree $d-1$ and whose
$F'$-degree is $d$ (in all other cases, the number of vertices with
$F'$-degree equal to $d$ is less than the sum of $p-r$ and the
number of vertices with $Q'$-degree equal to $d$, which would in
turn imply that $d_F(A)$ is strictly smaller than $d_Q(A)$). Now,
this implies that in other vertices of $A-X$ the distributions of
$d_{Q'}$ and $d_{F'}$ are the same. Combined with distributions of
degrees in $X$ we derive that $d_F(A)=d_Q(A)$, and so $F$ is a
minimum $g$-quasi-matching as well.
\end{proof}

The $1$-quasi-matchings alias semi-matchings were studied also in
\cite{hllt}. In order to connect our results to theirs, we adopt the
following definition.

\begin{definition}
Let $G=A+B$ be a bipartite graph, $F$ a semi-matching of $B$,
and $f:{\mathbb R}^+ \rightarrow {\mathbb R}$ a strictly (weakly)
convex function. Then the function
${\rm cost}_f$, defined as $\sum_{i=1}^{|A|}f(d_F(a_i))$ is called
a strict (weak) cost function for $f$.
\end{definition}

In \cite{hllt}, the strictly convex function
$\ell(n)=\frac{1}{2}n(n+1)$ is emphasized. It is interesting in task
scheduling, as it measures total latency of uniform tasks on a
single machine. It is also proved that a semi-matching $F$ has
minimum ${\rm cost}_{\ell}(F)$ if and only if any $F$-alternating
path in $G=A+B$ has decline at most 1. By Theorem
\ref{th:declinenov}, $F$-alternating paths in $G$ have such property
if and only if $F$ is (lexicographically) minimium semi-matching of
$B$. The special case of Theorem \ref{th:declinenov} where the need
function is constant 1 combined with the results from \cite{hllt}
leads to the following equivalent characteristics of the
(lexicographically) minimium semi-matching.

\begin{corollary}
\label{th:lovasz}
Let $G=A+B$ be a bipartite graph, $F$ a semi-matching of $B$,
and $\dfnc{f}{\RR^+}{\RR}$ a strictly convex function.
Then the following are equivalent:
\begin{wlist}
\item[(i)]  $F$ is (lexicographically) minimium semi-matching of $B$.
\item[(ii)] Any $F$-alternating path in $G$ has decline at most 1.
\item[(iii)]$F$ has minimum ${\rm cost}_{\ell}(F)$ for $\ell(n)=\frac{1}{2}n(n+1)$.
\item[(iv)] $F$ has minimum ${\rm cost}_f(F)$.
\item[(v)]  $L_p$-norm, $1\leq p <\infty$, of the vector
            $X=(d_F(a_1),\ldots,d_F(a_{|A|}))$ is minimal.
\item[(vi)] The variance of the vector $X=(d_F(a_1),\ldots,d_F(a_{|A|}))$
            is minimal.
\end{wlist}
\end{corollary}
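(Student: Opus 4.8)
The plan is to establish the chain of equivalences by exploiting the machinery already developed. Theorem \ref{th:declinenov} directly gives (i)$\Leftrightarrow$(ii), since a semi-matching is precisely a $g$-quasi-matching with $g\equiv 1$, and the theorem characterizes minimum quasi-matchings by the decline condition. The equivalence (ii)$\Leftrightarrow$(iii) is exactly the characterization from \cite{hllt} quoted in the text, so I would cite it rather than reprove it. This reduces the work to showing that (iv), (v), and (vi) are each equivalent to the already-established conditions.

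For the implications involving general strictly convex $f$, I would work with the underlying combinatorial fact that all minimum semi-matchings share the \emph{same} degree distribution $d_F(A)$: by (i)$\Leftrightarrow$(ii), any two semi-matchings satisfying the decline condition are both lexicographically minimum, and since a lexicographically minimum object is unique as a multiset of degrees, every semi-matching satisfying (ii) has identical sorted degree sequence. The key step is then a majorization argument: I would show that if $F$ satisfies the decline condition and $F'$ does not, then the degree sequence of $F$ is majorized by that of $F'$ (this follows from the lexicographic minimality together with the fact that both distribute the same total $|B|=\sum_a d_F(a)$ of edges over $A$). Since $f$ is strictly convex, the Karamata (majorization) inequality gives $\sum_i f(d_F(a_i))<\sum_i f(d_{F'}(a_i))$ strictly whenever the sequences differ, yielding (i)$\Rightarrow$(iv); and conversely, any cost-minimizer must be lexicographically minimum because otherwise an alternating path of decline $\ge 2$ would, after an $F$-exchange, strictly decrease the cost (the exchange replaces degrees $d_1>d_2+1$ by $d_1-1, d_2+1$, and strict convexity of $f$ makes this a strict decrease). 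This handles (iv) in both directions with a single convexity computation.

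Statements (v) and (vi) are then special instances. For (v), the function $x\mapsto x^p$ is strictly convex on $\RR^+$ for $1<p<\infty$, so minimizing the $L_p$-norm is equivalent to minimizing $\sum_i d_F(a_i)^p={\rm cost}_{x^p}(F)$, which is the $f=x^p$ case of (iv); the boundary case $p=1$ needs a separate remark, since $\sum_i d_F(a_i)=|B|$ is constant over all semi-matchings, so \emph{every} semi-matching is $L_1$-optimal, and I would note that the equivalence in (v) should be read as ``$L_p$ is minimal for every such $p$,'' which for $p>1$ forces minimality and is vacuously true at $p=1$. For (vi), the variance equals $\frac{1}{|A|}\sum_i d_F(a_i)^2-\bigl(\frac{1}{|A|}\sum_i d_F(a_i)\bigr)^2$, and since the mean $\frac{1}{|A|}\sum_i d_F(a_i)=|B|/|A|$ is fixed, minimizing variance is equivalent to minimizing $\sum_i d_F(a_i)^2$, i.e.\ the $f(x)=x^2$ case already covered.

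The main obstacle I anticipate is the majorization step: I must verify that lexicographic minimality of a degree sequence (with fixed sum and fixed length $|A|$) actually implies it is majorized by every other admissible sequence, not merely lexicographically smaller. This is true here because the decline condition forces the degrees to be as ``flat'' as possible within each $a$-section (Lemma \ref{Lgolob} pins down the distribution as $r$ copies of $d+1$ and $k-r$ copies of $d$, which is the majorization-minimal distribution for a fixed total), so the flat distribution is simultaneously lexicographically minimum and majorization-minimum. Making this rigorous in the presence of multiple sections — where edges are constrained by the bipartite structure and not freely redistributable — is the delicate point, but it is precisely what the $a$-section analysis in the proof of Theorem \ref{th:declinenov} already secures, so I would lean on that structural result rather than re-deriving the majorization from scratch.
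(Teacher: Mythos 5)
Your skeleton differs genuinely from the paper's proof, which is almost entirely a citation: the paper derives (i)$\iff$(ii) from Theorem \ref{th:declinenov} and then quotes \cite{hllt} for (ii)$\iff$(iii) (Theorem 3.1), (ii)$\iff$(iv) (Theorem 3.5), and (iii)$\iff$(v),(vi) (Theorems 3.9, 3.10). You instead reprove (iv)--(vi) self-contained via majorization and Karamata's inequality. Several pieces of your argument are sound and clean: the direction (iv)$\Rightarrow$(ii) via the exchange computation (a decline-$\ge 2$ exchange replaces degrees $d_1,d_2$ with $d_1-1,d_2+1$ where $d_1\ge d_2+2$, strictly decreasing $\sum_i f(d_F(a_i))$ for strictly convex $f$) is exactly right; so are the reductions of (v) and (vi) to (iv) using the fixed sum $\sum_i d_F(a_i)=|B|$ and fixed mean, including your correct observation that $p=1$ is degenerate. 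Were your argument complete, it would even subsume (iii), since $\ell$ is strictly convex, making the whole corollary independent of \cite{hllt}.

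However, your justification of the key majorization step is invalid as stated. Lexicographic minimality of a sorted sequence with fixed sum does \emph{not} imply it is majorized by every other admissible sequence: the sorted sequences $(2,2,2,0)$ and $(3,1,1,1)$ have equal sums, the first is lexicographically smaller, yet its top-three partial sum is $6>5$, so it is not majorized by the second. Your fallback via Lemma \ref{Lgolob} does not immediately close this either, since that lemma only gives lexicographic lower bounds on section distributions, and lexicographic bounds do not control partial sums. The claim you need is true, and the fix is already in your toolkit: start from an arbitrary semi-matching $F'$ and repeatedly exchange along decline-$\ge 2$ alternating paths; each exchange strictly decreases $\sum_i d(a_i)^2$, so the process terminates at some $F''$ satisfying (ii); each exchange is a transfer from a larger to a smaller degree, hence descends in the majorization order, so $d_{F''}(A)\preceq d_{F'}(A)$; and by (i)$\iff$(ii) together with uniqueness of the sorted lexicographically minimum sequence, $d_{F''}(A)=d_F(A)$, whence $d_F(A)\preceq d_{F'}(A)$ by transitivity. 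With that patch your route is complete and buys a self-contained proof where the paper relies on four external theorems.
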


\begin{proof}
The equivalence $(i)\iff (ii)$ follows from the Theorem \ref{th:declinenov}.
Furthermore, $(ii)$ is equivalent to $(iii)$ (\cite{hllt}, Theorem 3.1)
and $(iv)$ (\cite{hllt}, Theorem 3.5). Finally, $(iii)$ is equivalent to
$(v)$ (\cite{hllt}, Theorem 3.9) and $(vi)$ (\cite{hllt}, Theorem 3.10).
\end{proof}

Every property of the above Theorem \ref{th:lovasz} implies that $F$
has minimum ${\rm cost}_f(F)$ for every weakly convex function $f$
(\cite{hllt}, Theorem 3.5) and that $L_{\infty}$-norm of the
vector $X=(d_F(a_1),\ldots,$ $d_F(a_{|A|}))$ is minimal (\cite{hllt},
Theorem 3.12). In both cases, the converse is not true.

\begin{corollary}
\label{th:matching}
Let $G=A+B$ be a bipartite graph and let $F$ be a (lexicographically)
minimium semi-matching of $B$. Then there exists a maximum matching
$M\subseteq F$ in $G$.
\end{corollary}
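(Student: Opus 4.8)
The plan is to exhibit a concrete maximum matching inside $F$ and certify its maximality through the decline criterion of Theorem~\ref{th:declinenov}. Since $F$ is a semi-matching of $B$, every vertex of $B$ is incident with exactly one edge of $F$, so $F$ is a vertex-disjoint union of stars, one centered at each vertex $a\in A$ with $d_F(a)\ge 1$. First I would define $M$ by selecting, at every such center $a$, exactly one of its incident $F$-edges. Then $M\subseteq F$ is a matching whose size equals the number of $A$-vertices of positive $F$-degree, and the $M$-unsaturated vertices are precisely the vertices of $A$ with $d_F=0$ together with the non-selected leaves in $B$; note that every vertex of $B$ is $F$-saturated.

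To prove that $M$ is maximum I would invoke the augmenting-path criterion: a matching is maximum exactly when it admits no augmenting path. So suppose, for contradiction, that $P$ is an $M$-augmenting path. As $G$ is bipartite, $P$ joins an $M$-unsaturated vertex $b_0\in B$ to an $M$-unsaturated vertex $a_0\in A$, and by the previous paragraph $d_F(a_0)=0$. Writing $P=b_0,a_1,b_1,\ldots,a_0$, its matching edges are exactly the edges $a_jb_j\in M\subseteq F$, while each remaining edge of $P$ lies outside $F$ (the unique $F$-edge at an internal vertex $b_j$ is its matching edge $a_jb_j$). Let $a'=N_F(b_0)$ be the star-center of $b_0$; since $b_0$ is an unselected leaf, $M$ chose a different leaf at $a'$, and hence $d_F(a')\ge 2$.

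Finally I would turn $P$ into an $F$-alternating path from $a'$ to $a_0$. If $a'\notin V(P)$, I prepend the edge $a'b_0\in F$ to $P$; the transition at $b_0$ is legal because $b_0a_1\notin F$ (as $a_1\ne a'$). If instead $a'=a_i$ lies on $P$, I take the subpath of $P$ from $a_i$ to $a_0$, whose first edge $a_ib_i\in F$. In both cases one obtains a (simple) forward $F$-alternating path from $a'$ to $a_0$: its first edge lies in $F$, its last edge (the edge of $P$ entering $a_0$) is not in $F$ since $d_F(a_0)=0$, and the alternation condition holds at every internal vertex. Its decline is $d_F(a')-d_F(a_0)=d_F(a')\ge 2$, contradicting Theorem~\ref{th:declinenov}. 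Hence $M$ admits no augmenting path and is a maximum matching of $G$ contained in $F$. The single point requiring care, and the main obstacle, is guaranteeing that the path built from $a'$ is genuinely simple; this is exactly what the case distinction on whether $a'$ lies on $P$ resolves, after which the decline computation is immediate.
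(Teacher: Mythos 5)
Your proof is correct, but it takes a genuinely different route from the paper's. The paper dispatches this corollary in one line: by Corollary~\ref{th:lovasz}, a lexicographically minimum semi-matching is exactly a cost-optimal semi-matching in the sense of \cite{hllt}, and Theorem~3.7 of \cite{hllt} asserts that every optimal semi-matching contains a maximum matching. You instead give a self-contained argument inside the paper's own machinery: decompose $F$ into vertex-disjoint stars centered in $A$ (valid since every $b\in B$ has exactly one $F$-edge), let $M$ select one edge per star, and convert any hypothetical $M$-augmenting path (Berge's criterion) into a simple forward $F$-alternating path from the star-center $a'$ of the unsaturated $B$-endpoint to the $A$-endpoint $a_0$ with $d_F(a_0)=0$; since $b_0$ is a non-selected leaf, $d_F(a')\geq 2$, so the decline is at least $2$, contradicting Theorem~\ref{th:declinenov} (only its easy, exchange-based direction is needed, applied with constant need $1$). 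Your case distinction on whether $a'$ lies on $P$ correctly resolves the one real pitfall, simplicity of the spliced path; note also that the degenerate possibility $a'=a_0$ is excluded by $d_F(a')\geq 2>0=d_F(a_0)$, a fact you use implicitly when you take $a'=a_i$ to be internal and followed by the matching edge $a_ib_i$. As for what each approach buys: the paper's proof is shorter but leans on an external result and on the chain of equivalences in Corollary~\ref{th:lovasz}, whereas yours is elementary, effectively reproves the cited Theorem~3.7 of \cite{hllt} directly from the decline characterization, and in fact establishes slightly more than the statement asks: \emph{every} choice of one $F$-edge per star is already a maximum matching, which makes the extraction of $M$ from $F$ explicit and constructive.
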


\begin{proof}
Follows directly from Theorem \ref{th:lovasz} and Theorem 3.7 from \cite{hllt}.
\end{proof}

The converse of Corollary \ref{th:matching} does not hold (see
\cite{hllt}).

\section{Generalized Hungarian method}
\label{sc:hungarian}

In this section, we solve Problem \ref{pb:minimum} with an algorithm
of complexity $O(g(B)|E(G)|)$. We use the fact that quasi-matchings are
a generalization of matchings: if we restrict ourselves to
quasi-matchings with degree one, our method is a generalization of
the Hungarian method of augmenting paths for finding maximum
matchings in bipartite graphs.\\

\noindent Let $B=\lset{b_1}{b_n}$ and $B_\ell=\lset{b_1}{b_\ell}$,
$\ell=1,\ldots,n$. Define a mapping $g_i\colon
B\rightarrow \NN$ with $g_0(b)=0$, for all $b\in B$, $\ell_1=1$, 
$\ell_i=\max\dset{j}{g_{i-1}(b_j)\neq 0}$ for $i>1$, and
$$g_i(b) = \left\{ \begin{array}{ll}
         g_{i-1}(b)+1; & b=b_{\ell_i}\hbox{ and } g_{i-1}(b)<g(b), \\
         1; & b=b_{\ell_i+1}\hbox{ and } g_{i-1}(b_{\ell_i})=g(b_{\ell_i}), \\
         g_{i-1}(b); & {\rm otherwise}
\end{array} \right.$$
\noindent
for every $1\leq i \leq g(B)$. Note that for simplicity we assume $g(b)>0$ 
for all $b\in B$. We propose to find a minimum
$g$-quasi-matching $F$ of $B$ using an iterative algorithm that gradually
extends an $g_i$-quasi-matching $F_i$ of $B_{\ell}$
using an $F_{i-1}$-augmenting path $P_{i-1}$ from $b_{\ell}$ to $a\in A$
with smallest $d_{F_{i-1}}(a)$. By induction, we argue that $F_i$ is a
minimum $g_i$-quasi-matching of $B_{\ell}$, thus the final $F_i$ is a
minimum $g$-quasi-matching of corresponding $B_\ell=B$.

\begin{algorithm}
\caption{Iterative construction of a minimum $g$-quasi-matching of $B$.}
\label{al:iterative}
\begin{algorithmic}
\PARAMETER{$G=A+B$} a bipartite graph with $B=\lset{b_1}{b_{n}}$.
\OUTPUT{$F$} a minimum $g$-quasi-matching of $B$.
\STATE Set $i=0$, $\ell=0$.
\STATE Set $F_i=\emptyset$, $B_{\ell}=\emptyset$, $G_{\ell}=\emptyset$.
\WHILE{$\ell \leq n$}
  \STATE $\ell=\ell +1$.
  \STATE Set $B_{\ell}=B_{\ell -1}\cup\{b_{\ell}\}$.
  \STATE Set $G_{\ell}=G[B_{\ell -1}\cup A]$.
  \STATE $c=0$.
    \WHILE{$c<g(b_{\ell})$}
    \STATE $i=i+1$, $c=c+1$.
    \STATE Set $P_{i-1}$ to be an $F_{i-1}$-augmenting path
           in $G_{\ell}$ from $b_{\ell}$ to  $a\in A$ with smallest
           possible degree $d_{F_{i-1}}(a)$.
    \STATE Set $F_i=F_{i-1}\oplus E(P_{i-1})$.
    \ENDWHILE
\ENDWHILE
\RETURN $F_i$.
\end{algorithmic}
\end{algorithm}
\begin{lemma}
\label{stopnje}
Let $G=A+B$ be a bipartite graph and $a\in A$. Using the notation of Algorithm \ref{al:iterative}, the following holds:
$$d_{F_i}(a) = \left\{ \begin{array}{ll}
         d_{F_{i-1}}(a)+1; & {\rm if\hspace{2mm}} a {\rm \hspace{2mm} is\hspace{2mm} the\hspace{2mm}
          A{\rm -endvertex}\hspace{2mm} {\rm of}\hspace{2mm}} P_{i-1}, \\
         d_{F_{i-1}}(a); & {\rm otherwise}\,.
\end{array} \right.$$
\end{lemma}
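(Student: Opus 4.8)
The plan is to trace the effect of the symmetric difference $F_i = F_{i-1}\oplus E(P_{i-1})$ on the $F$-degree of each vertex $a\in A$, by first pinning down the membership pattern of the edges along the augmenting path $P_{i-1}$. Writing $P_{i-1}$ as $b_\ell = v_0, v_1, \ldots, v_m = a$ with $v_0 = b_\ell \in B$ and $v_m = a \in A$, bipartiteness forces the vertices to alternate sides, so $m$ is odd and the $A$-vertices on the path are exactly $v_1, v_3, \ldots, v_m$. By the definition of an $F_{i-1}$-augmenting path, the first edge $v_0 v_1$ is not in $F_{i-1}$, and $P_{i-1} - b_\ell$ is an $F_{i-1}$-alternating path from $v_1$ to $v_m$ along which edges in and out of $F_{i-1}$ strictly alternate, with the edge at $v_1$ lying in $F_{i-1}$ and the edge at $v_m$ lying outside $F_{i-1}$. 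First I would record that this forces every edge $v_{k-1}v_k$ with $k$ odd to lie outside $F_{i-1}$ and every such edge with $k$ even to lie inside $F_{i-1}$.

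With the pattern fixed, the degree bookkeeping is local, and I would examine the three kinds of $A$-vertices separately. An internal $A$-vertex $v_k$ (with $k$ odd, $1 \le k \le m-2$) is incident on $P_{i-1}$ with exactly the two edges $v_{k-1} v_k$ and $v_k v_{k+1}$; by the alternating pattern one of them lies in $F_{i-1}$ and the other does not, so toggling both leaves $d_{F_i}(v_k) = d_{F_{i-1}}(v_k)$. The $A$-endvertex $a = v_m$ is incident on $P_{i-1}$ with the single edge $v_{m-1} v_m$, which has odd index, hence lies outside $F_{i-1}$ and is therefore added by the exchange, giving $d_{F_i}(a) = d_{F_{i-1}}(a) + 1$. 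Finally, any vertex of $A$ that does not lie on $P_{i-1}$ has no incident edge toggled, so its $F$-degree is unchanged. Collecting these three cases yields precisely the claimed formula.

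The argument is essentially a direct calculation once the edge-membership pattern is established, so I do not expect a serious obstacle. The one point that requires care is the correct parsing of the asymmetric definition of an augmenting path, in particular verifying that the edge incident with the $A$-endvertex $a$ lies outside $F_{i-1}$ rather than inside, since this is exactly what makes the exchange raise $d_{F_{i-1}}(a)$ by one rather than lower it, and that every internal $A$-vertex indeed meets precisely two path edges of opposite $F_{i-1}$-status. Both facts follow from the strict alternation guaranteed by the definition together with bipartiteness forcing the path to have odd length, so no further ingredients are needed.
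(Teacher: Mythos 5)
Your proof is correct and takes essentially the same route as the paper's: a direct local case analysis of the symmetric difference $F_i = F_{i-1}\oplus E(P_{i-1})$ on off-path vertices of $A$, internal path vertices (one incident path edge in $F_{i-1}$ and one outside, so the toggle preserves degree), and the $A$-endvertex, whose single path edge lies outside $F_{i-1}$ by the definition of an augmenting path and is therefore added, raising the degree by one. Your explicit parity indexing of the edges along $P_{i-1}$ merely spells out the alternation pattern that the paper invokes directly from the definition.
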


\begin{proof}
The Lemma is obviously true for every vertex $a\in A\setminus V(P_{i-1})$.
Since $F_i=F_{i-1}\oplus E(P_{i-1})$, $e\in F_{i-1}\cap E(P_{i-1})$ implies
that $e\notin F_i$. Similarly, for every $e\in E(P_{i-1})\setminus F_{i-1}$
we have $e\in F_i$. Therefore, the number of $F_i$-edges at an internal
$P_{i-1}$ vertex $a$ is the same as the number of $F_{i-1}$-edges at $a$.
However, if $a$ is the $A$-endvertex of $P_{i-1}$, then its only
$P_{i-1}$ incident edge is not in $F_{i-1}$ but is in $F_i$, so
$d_{F_i}(a)=d_{F_{i-1}}(a)+1$.
\end{proof}

\begin{theorem}
\label{th:minimalen_F_i}
Let $G=A+B$ be a bipartite graph. Using the notation of Algorithm
\ref{al:iterative}, $F_i$ is a minimum $g_i$-quasi-matching of $B_{\ell}$
in $G_{\ell}$ for $i=1,\ldots,n$.
\end{theorem}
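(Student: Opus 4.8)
The plan is to argue by induction on $i$, with Theorem~\ref{th:declinenov} as the pivot: it suffices to exhibit $F_i$ as a $g_i$-quasi-matching of $B_\ell$ all of whose $F_i$-alternating paths in $G_\ell$ have decline at most~$1$. The base case $i=0$ is trivial, since $F_0=\emptyset$ is a minimum $g_0$-quasi-matching of $B_0=\emptyset$ for the zero need $g_0$, and $i=1$ is settled by the single edge chosen when $\ell=1$. Assume $F_{i-1}$ is a minimum $g_{i-1}$-quasi-matching; by Theorem~\ref{th:declinenov} it has no $F_{i-1}$-alternating path of decline exceeding~$1$. When a new vertex $b_\ell$ of current need $0$ enters and $G_{\ell-1}$ is replaced by $G_\ell$, no new alternating path is created, since $b_\ell$ carries no $F_{i-1}$-edge and hence cannot be interior to an $F_{i-1}$-alternating path; thus $F_{i-1}$ remains minimum in the enlarged graph.

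First I would check that $F_i=F_{i-1}\oplus E(P_{i-1})$ is again a $g_i$-quasi-matching. By the definition of the mappings $g_j$, exactly one vertex of $B$ --- precisely the augmenting-path source $b_\ell$ --- has its need increased by one in passing from $g_{i-1}$ to $g_i$. An $F$-exchange along $P_{i-1}$ raises the degree of $b_\ell$ by one (its single incident path-edge $a'b_\ell\notin F_{i-1}$ is added) while leaving every interior $B$-vertex's degree unchanged, so all needs including the new one at $b_\ell$ are met. By Lemma~\ref{stopnje}, the $A$-endvertex $a$ of $P_{i-1}$ is the unique $A$-vertex whose degree changes, with $d_{F_i}(a)=d_{F_{i-1}}(a)+1$.

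It remains to re-establish the decline bound for $F_i$; this is the crux and mirrors the corresponding passage in the proof of Theorem~\ref{th:declinenov}. I would use that $a$ realizes the smallest $F_{i-1}$-degree among all $A$-vertices reachable from $b_\ell$ by an $F_{i-1}$-augmenting path. Suppose, for contradiction, that some forward $F_i$-alternating path $R$ from $x$ to $y$ has $d_{F_i}(x)\ge d_{F_i}(y)+2$, and first assume $R$ is edge-disjoint from $P_{i-1}$, so that, edge-memberships agreeing off $E(P_{i-1})$, $R$ is also $F_{i-1}$-alternating. If $x\ne a$ then $d_{F_{i-1}}(x)-d_{F_{i-1}}(y)\ge d_{F_i}(x)-d_{F_i}(y)\ge 2$, contradicting the inductive decline bound. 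If $x=a$, then $d_{F_{i-1}}(y)\le d_{F_i}(y)\le d_{F_{i-1}}(a)-1$, and splicing $R$ onto the forward part $P_{i-1}-b_\ell$ --- which reaches $a$ on a non-$F_{i-1}$-edge while $R$ leaves $a$ on an $F_{i-1}$-edge, so alternation is preserved at $a$ --- and prepending $a'b_\ell$ yields an $F_{i-1}$-augmenting path from $b_\ell$ to $y$ with $d_{F_{i-1}}(y)<d_{F_{i-1}}(a)$, contradicting the minimal choice of $a$.

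The step I expect to be the main obstacle is the remaining case, where $R$ shares edges with $P_{i-1}$: then $R$ is only $F_i$-alternating and need not be $F_{i-1}$-alternating, so the two clean arguments above do not apply directly. Here I would track how $R$ meets $P_{i-1}$ and reroute along their common edges to extract either a genuine simple $F_{i-1}$-alternating path of decline at least~$2$ or a genuine simple $F_{i-1}$-augmenting path from $b_\ell$ to a vertex of $F_{i-1}$-degree below $d_{F_{i-1}}(a)$ --- in either case a contradiction. Once the decline bound is secured for $F_i$, Theorem~\ref{th:declinenov} gives that $F_i$ is a minimum $g_i$-quasi-matching of $B_\ell$ in $G_\ell$, completing the induction.
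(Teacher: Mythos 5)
Your architecture coincides with the paper's own proof: induction on $i$, with Theorem~\ref{th:declinenov} converting minimality into the decline bound, Lemma~\ref{stopnje} isolating the $A$-endvertex $a$ of $P_{i-1}$ as the only $A$-vertex whose degree changes, and a case split on whether a violating path shares edges with $P_{i-1}$. Your edge-disjoint analysis with $x\neq a$ is correct and in fact a little more economical than the paper's: the single inequality $d_{F_{i-1}}(x)-d_{F_{i-1}}(y)\geq d_{F_i}(x)-d_{F_i}(y)\geq 2$ absorbs the paper's Cases A1 and A3 at once, and your explicit remark that the newly arrived $b_\ell$ carries no $F_{i-1}$-edge and so cannot lie on any $F_{i-1}$-alternating path makes precise what the paper only gestures at with its ``$B'=B_\ell$ (or $B'=B_{\ell-1}$)'' parenthesis.

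There are, however, two genuine gaps. First, already in the edge-disjoint subcase $x=a$ your splice is incomplete: edge-disjointness does not prevent $R$ and $P_{i-1}$ from sharing \emph{vertices}, so the concatenation $b_\ell\,(P_{i-1}-b_\ell)\,R$ may repeat vertices and fail to be a path, while the contradiction with the minimal choice of $a$ requires a genuine $F_{i-1}$-augmenting path. The paper's Case A2 repairs exactly this by splicing at the common vertex $v$ of the two paths closest to $b_\ell$ along $P_{i-1}$, forming $Q=b_\ell P_{i-1}vPa''$. Second, and decisively, the shared-edge case --- which you yourself flag as the main obstacle --- is left as a plan (``track how $R$ meets $P_{i-1}$ and reroute''), not an argument; this is where roughly half of the paper's proof lives (Cases B1--B3). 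The paper closes it with two explicit hybrid paths: the augmenting path $Q=b_\ell P_{i-1}vPa''$ as above, which by the choice of $P_{i-1}$ yields $d_{F_{i-1}}(a)\leq d_{F_{i-1}}(a'')$, and the alternating path $R=a'Pv'P_{i-1}a$, where $v'$ is the common vertex closest to $a'$ on the violating path, whose $F_{i-1}$-decline is then forced to be at least $2$ (at least $3$ when $a=a''$), contradicting minimality of $F_{i-1}$ via Theorem~\ref{th:declinenov}. Your stated dichotomy --- extract either a simple $F_{i-1}$-alternating path of decline at least $2$ or a simple $F_{i-1}$-augmenting path ending at a vertex of degree below $d_{F_{i-1}}(a)$ --- is precisely how these cases resolve, so the idea is right; but as submitted the induction step is not closed, since constructing these hybrids and checking alternation at the splice vertices $v$ and $v'$ is exactly the content your sketch omits.
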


\begin{proof}
For $i=1$, we have $\ell=1$, $B_1=\{b_1\}$ and $g_1(b_1)=1$. Let $a$
be any vertex from $N(b_1)$. Then $F_1=P_0=b_1a$ is a minimum
$g_1$-quasi-matching of $B_1$ in $G_1$.

Suppose now that $F_{i-1}$ is a minimum $g_{i-1}$-quasi-matching of
$B'=B_{\ell}$ (or $B'=B_{\ell-1}$) in $G'=G_{\ell}$ (or
$G'=G_{\ell-1}$). We claim that $F_i=F_{i-1}\oplus E(P_{i-1})$ is a
minimum $g_i$-quasi-matching of $B_\ell$ in $G_\ell$. If this is not
the case, then Theorem \ref{th:declinenov} yields an
$F_i$-alternating path $P$ in $G_\ell$ from $a'\in A$ to $a''\in A$
with decline $d_{F_i}(a')-d_{F_i}(a'')\geq 2$. Note that every
$F_i$-alternating subpath of $P$ from a vertex $a\in A$ leads to
$a''$ and every backward $F_i$-alternating subpath leads to $a'$.

Consider first the case for $E(P)\cap E(P_{i-1})=\emptyset$.
Then an edge $e$ of $P$ is in $F_{i-1}$ if and only if it is in
$F_i$. For the rest of the proof let $a$ denote the endvertex of $P_{i-1}$.
We distinguish three cases:\\

{\bf Case A1:} $a\notin\{a',a''\}$.\\
Lemma \ref{stopnje} implies that $d_{F_i}(a')=d_{F_{i-1}}(a')$ and
$d_{F_i}(a'')=d_{F_{i-1}}(a'')$. Thus, $P$ is an
$F_{i-1}$-alternating path from $a'$ to $a''$ in $G'$ with
decline at least 2. A contradiction to Theorem \ref{th:declinenov},
since $F_{i-1}$ is a minimum $g_{i-1}$-quasi-matching of $B'$ in $G'$.\\

{\bf Case A2:} $a=a'$.\\
Lemma \ref{stopnje} implies $d_{F_i}(a')=d_{F_{i-1}}(a')+1$
and $d_{F_i}(a'')=d_{F_{i-1}}(a'')$. Let $v$ be the common vertex
of the paths $P$ and $P_{i-1}$ closest to $b_{\ell}$ in $P_{i-1}$.
Then $Q=b_{\ell}P_{i-1}vPa''$ (resp. $Q=b_{\ell}Pa''$ for $v=b_{\ell}$)
is an $F_{i-1}$-augmenting path in $G_\ell$. Since $P_{i-1}$ in $G_{\ell}$
is chosen so that $d_{F_{i-1}}(a)$ is minimum, we have
\begin{center}
$d_{F_{i-1}}(a)=d_{F_{i-1}}(a')\leq d_{F_{i-1}}(a'')$\\
$d_{F_{i-1}}(a')-d_{F_{i-1}}(a'')\leq 0$.
\end{center}

This contradicts the assumption $d_{F_i}(a')-d_{F_i}(a'')\geq 2$, as
\begin{center}
$d_{F_{i-1}}(a')+1-d_{F_{i-1}}(a'')\geq 2$\\
$d_{F_{i-1}}(a')-d_{F_{i-1}}(a'')\geq 1.$
\end{center}

{\bf Case A3:} $a=a''$.\\
In this case, Lemma \ref{stopnje} implies that $P$ is an
$F_{i-1}$-alternating path from $a'$ to $a''$ in $G'$ with
$d_{F_i}(a')=d_{F_{i-1}}(a')$ and $d_{F_i}(a'')=d_{F_{i-1}}(a'')+1$.
The inequality
$d_{F_i}(a')-d_{F_i}(a'')\geq 2$ yields
\begin{center}
$d_{F_{i-1}}(a')-d_{F_{i-1}}(a'')-1\geq 2$\\
$d_{F_{i-1}}(a')-d_{F_{i-1}}(a'')\geq 3$.
\end{center}

Hence, $P$ is an $F_{i-1}$-alternating path in $G'$
with decline at least 3. But this is again impossible by Theorem
\ref{th:declinenov} and minimality of $F_{i-1}$.\\

It remains to examine the case $E(P)\cap E(P_{i-1})\neq\emptyset$.

{\bf Case B1:} $a\notin\{a',a''\}$.\\
Let $v$ be the common vertex of $P$ and $P_{i-1}$ closest
to $b_{\ell}$ in $P_{i-1}$. Then $Q=b_{\ell}P_{i-1}vPa''$
(resp. $Q=b_{\ell}Pa''$ for $v=b_{\ell}$) is an
$F_{i-1}$-augmenting path in $G_{\ell}$. The choice of $P_{i-1}$ implies
$d_{F_{i-1}}(a)\leq d_{F_{i-1}}(a'')$.

Let $v'$ be the common vertex of $P$ and $P_{i-1}$ closest to $a'$ in $P$.
Then $R=a'Pv'P_{i-1}a$ is an $F_{i-1}$-alternating
path in $G_{\ell}$. Since Lemma \ref{stopnje} implies
$$2\leq d_{F_i}(a')-d_{F_i}(a'')=d_{F_{i-1}}(a')-d_{F_{i-1}}(a'')\leq
d_{F_{i-1}}(a')-d_{F_{i-1}}(a),$$
$R$ is a path with $F_{i-1}$-decline at least two, another contradiction
to Theorem \ref{th:declinenov} and minimality of $F_{i-1}$.

{\bf Case B2:} $a=a'$.\\
Let $Q$ be the $F_{i-1}$-augmenting path in $G_{\ell}$ from $b_{\ell}$
to $a''$ as in case B1. The existence of such a path ensures that
$d_{F_{i-1}}(a)-d_{F_{i-1}}(a'')\leq 0$. But this is not possible,
since Lemma \ref{stopnje} implies
$$2\leq d_{F_i}(a')-d_{F_i}(a'')=d_{F_i}(a)-d_{F_i}(a'')=
d_{F_{i-1}}(a)+1-d_{F_{i-1}}(a'')$$
and hence $d_{F_{i-1}}(a)-d_{F_{i-1}}(a'')\geq 1$.

{\bf Case B3:} $a=a''$.\\
Let $R$ be the $F_{i-1}$-alternating path in $G_{\ell}$ from $a'$ to $a$
constructed as in case B1. We claim that $R$ has decline
at least three. From $d_{F_i}(a')-d_{F_i}(a'')\geq 2$ and Lemma
\ref{stopnje}, we deduce that
\begin{center}
$d_{F_{i-1}}(a')-d_{F_{i-1}}(a'')-1\geq 2$\\
$d_{F_{i-1}}(a')-d_{F_{i-1}}(a'')\geq 3$\\
$d_{F_{i-1}}(a')-d_{F_{i-1}}(a)\geq 3$.\\
\end{center}
But this contradicts the minimality of $F_{i-1}$.

We conclude that $F_i$ is a minimum $g_i$-quasi-matching
of $B_{\ell}$ in $G_{\ell}$.
\end{proof}

By setting $\ell=n$, Theorem \ref{th:minimalen_F_i} proves
correctness of the Algorithm \ref{al:iterative}.

\begin{corollary}
\label{cr:minimalenF} Algorithm \ref{al:iterative} finds a minimum
$g$-quasi-matching of $B$ and has time-complexity $O(g(B)|E(G)|)$,
where $g(B)$ is the need of $B$.
\end{corollary}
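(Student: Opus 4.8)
The plan is to deduce both claims of the corollary directly from Theorem \ref{th:minimalen_F_i} together with a bookkeeping argument on the two nested loops of Algorithm \ref{al:iterative}. For correctness, I would observe that the algorithm halts exactly when the outer \textbf{while} loop has processed all of $B$, i.e. when $\ell=n$, so that at termination $B_\ell=B$ and $G_\ell=G$. By construction the running need mapping $g_i$ is incremented one unit at a time and, once $b_\ell$ has been handled by its inner loop, satisfies $g_i(b_\ell)=g(b_\ell)$; hence after the final iteration $g_i$ coincides with $g$ on all of $B$. Applying Theorem \ref{th:minimalen_F_i} with $\ell=n$ then shows that the returned set $F_i$ is a minimum $g$-quasi-matching of $B$ in $G$, which is precisely what correctness requires.

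For the running time I would first count the total number of augmenting-path computations. The inner \textbf{while} loop executes $g(b_\ell)$ times for each $b_\ell$, so the index $i$ is incremented exactly $\sum_{\ell=1}^{n} g(b_\ell)=g(B)$ times over the whole run; each increment corresponds to one augmenting-path search followed by one symmetric difference $F_i=F_{i-1}\oplus E(P_{i-1})$.

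It then remains to bound the cost of a single iteration by $O(|E(G)|)$. Each search looks for an $F_{i-1}$-augmenting path from $b_\ell$ to a vertex $a\in A$ of smallest possible $F_{i-1}$-degree. I would realise this by a breadth-first alternating exploration from $b_\ell$: starting along a non-$F$ edge, then alternating between $F$- and non-$F$-edges, one marks every $A$-vertex reachable by such an alternating path. Since each edge is examined only a constant number of times, this exploration costs $O(|E(G)|)$; among the reachable $A$-vertices one then selects one of minimum $F_{i-1}$-degree and reconstructs the path, again in $O(|E(G)|)$, while the symmetric difference touches only edges of $P_{i-1}$, costing $O(|V(G)|)=O(|E(G)|)$. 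Multiplying the per-iteration cost by the $g(B)$ iterations yields the claimed bound $O(g(B)|E(G)|)$.

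The main obstacle I expect is this last step: making precise that a single augmenting-path search, \emph{including} the requirement that the path end at a minimum-degree vertex, can be carried out in linear time. The selection of the best endpoint must not force a separate search for each candidate; the key is that one alternating breadth-first sweep simultaneously determines the whole set of reachable $A$-vertices, after which choosing the minimum-degree one is a single linear pass. One should also verify that no vertex or edge is revisited in a way that breaks the $O(|E(G)|)$ bound, which is where the alternating structure guaranteed by the definition of an $F$-augmenting path is used.
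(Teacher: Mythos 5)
Your proposal is correct and takes essentially the same approach as the paper: correctness is obtained by invoking Theorem \ref{th:minimalen_F_i} at the final iteration (where $B_\ell=B$ and $g_i=g$), and the $O(g(B)|E(G)|)$ bound by counting the $g(B)$ augmenting-path steps, each implemented as one alternating breadth-first sweep that discovers all reachable $A$-vertices simultaneously so that the minimum-degree endpoint is chosen in a single linear pass. The paper phrases this sweep as an ``augmented Hungarian'' search tree $T$ with alternating levels of $F_{i-1}$- and non-$F_{i-1}$-edges, in which the augmenting path from the chosen endpoint is unique --- exactly the single-sweep resolution you identify as the key obstacle.
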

\begin{proof}
As $B_n=B$, Theorem \ref{th:minimalen_F_i} establishes that $B$ is a
minimum $g$-quasi-matching of $B$. The path $P_i$ can be found using
an augmented Hungarian method: the algorithm performs a breadth-first
search from the vertex $b_\ell$ in such way, that if the vertex whose
neighbors are examined is in $A$, then the search proceeds along its
$F_{i-1}$ incident edges only, but from vertices of $B$, the search
proceeds along the non-$F_{i-1}$-incident edges only. The search tree
$T$ produced in this manner has exchanging levels of non-$F_{i-1}$ and
$F_{i-1}$ edges, and in $T$ there is a unique $F_{i-1}$-augmenting path
from any vertex to $B$. This path starting at a vertex $a\in A$ of
minimum $F_{i-1}$-degree is the path $P_{i-1}$ required for Algorithm
\ref{al:iterative}. The whole tree $T$ (and thus the augmenting path
$P_{i-1}$ can be constructed in $O(|E|)$ time. As there are
$g(B)=\sum_{b\in B}g(b)$ iterations, the overall complexity of Algorithm
\ref{al:iterative} is $O(g(B)|E(G)|)$.
\end{proof}
Note that Theorem \ref{th:declinenov} can be applied to prune the
tree constructed in the generalized Hungarian method in such a way,
that the search tree contains vertices of one $F_{i-1}$-degree only.
If $d$ is the minimum $F_{i-1}$-degree of a neighbor of $b_i$, then
$P_{i-1}$ need not contain any vertex of degree $d+1$. Furthermore,
as soon as a vertex of $F_{i-1}$-degree $d-1$ is encountered, we can
assume that this is the terminating vertex of $P_{i-1}$. These
observations do not improve the theoretical complexity of the algorithm
(in the worst case, for instance when $G$ has a perfect matching, we
still need to consider $O(|E(G)|)$ edges at each iteration), but they
could considerably improve any practical implementation.


\section{On-line application of Algorithm \ref{al:iterative}}
\label{sc:online}

Note that each step of Algorithm \ref{al:iterative} can be viewed as
a part of an on-line procedure, where the need of a vertex, denoted
$b_{\ell}$, increases by one. In particular, this allows for
immediate application of this algorithm to the on-line setting
--- to rearrange it for the on-line addition of a new vertex $v$
with need $g(v)$, one only needs to perform one step of
the outer {\bf while} loop (hence the inner {\bf while} loop which
takes $O(|E(G)|)$ time is performed $g(v)$ times).

However, the full on-line setting, as presented in \cite{azar}, also
allows for removal of the vertices of $B$, i.e. an on-line event is
not just appearance of a new vertex, but also disappearance of an
existing vertex. In our setting, this would correspond to a wireless
sensor malfunction or running out of battery, and in the
task-scheduling setting of \cite{azar}, this corresponds to a task
being removed from the schedule or the number of required machines
for the task being decreased.

Algorithm \ref{al:remove} describes how to augment an existing
minimum quasi-matching when the need of a single vertex $b\in B$
decreases by one to obtain an optimal quasi-matching with respect to
the new need function. As above, if $b$ disappears, then this
algorithm simply needs to be performed $g(v)$ times.

Let $G=A+B$ be a bipartite graph with $B=\lset{b_1}{b_{n}}$, and let
$b\in B$, say $b=b_k$ for some $k$. If $g\colon B\rightarrow \NN$ is
a need function of $B$, then we denote by $g_b$ the mapping from $B$
to $\NN$ with $g_b(b_i)=g(b_i)$ for $i\neq k$, and $g_b(b)=g(b)-1$.

\begin{algorithm}
\caption{Obtaining a minimum $g_b$-quasi-matching from a minimum
$g$-quasi-matching in $G=A+B$.} \label{al:remove}
\begin{algorithmic}
\PARAMETER{$G=A+B$} a bipartite graph with $B=\lset{b_1}{b_{n}}$ and
need function $g\colon B\rightarrow \NN$. \PARAMETER{$F$} a minimum
$g$-quasi-matching of $B$ in $G$. \PARAMETER{$b$} a vertex of $B$.
\OUTPUT{$F'$} a minimum $g_b$-quasi-matching in $G$.
  \STATE Set $A_b$ be the set of $F$-neighbors of $b$.
  \STATE Set $a\in A_b$ be the vertex with largest $F$-degree in $A_b$.
  \IF{there is a backward $F$-alternating path $P$ in $G$ from $a'\in A_b$
  to $a''\in A$ with $d_{F}(a'')=d_{F}(a')+1$}
        \STATE set $F'=F\oplus P-a'b$
    \ELSE
        \STATE set $F'=F-ab$.
    \ENDIF
    \RETURN $F'$.
\end{algorithmic}
\end{algorithm}

\begin{theorem}
\label{th:removal} Let $G=A+B$ be a bipartite graph and $F$ a
minimum $g$-quasi-matching of $B$ in $G$. Using the notation and
assumptions of Algorithm \ref{al:remove}, $F'$ is a minimum
$g_b$-quasi-matching of $B$ in $G$.
\end{theorem}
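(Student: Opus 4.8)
The plan is to verify the two defining properties of a minimum $g_b$-quasi-matching separately: first that $F'$ is a $g_b$-quasi-matching at all, and then, invoking Theorem~\ref{th:declinenov}, that no $F'$-alternating path has decline exceeding $1$. The first part is degree bookkeeping. Since $F$ is a minimum $g$-quasi-matching, $d_F(v)=g(v)$ for every $v\in B$, in particular $d_F(b)=g(b)$. In the \textbf{else} branch $F'=F-ab$ only lowers $d(b)$ and $d(a)$ each by one, so $d_{F'}(b)=g(b)-1=g_b(b)$ while all other $B$-degrees are unchanged. In the \textbf{if} branch, recall that an $F$-exchange along an alternating path between two $A$-vertices preserves every $B$-degree and changes only the two $A$-endpoints; thus $F\oplus P$ raises $d(a')$ by one and lowers $d(a'')$ by one, and removing $a'b$ (which lies in $F$ but not on $P$, since $a'$ is the non-$F$-endpoint of $P$) lowers $d(a')$ and $d(b)$ each by one. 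The net effect is $d_{F'}(a'')=d_F(a'')-1=d_F(a')$, $d_{F'}(a')=d_F(a')$, $d_{F'}(b)=g(b)-1=g_b(b)$, and everything else as in $F$. So in both branches $F'$ is a $g_b$-quasi-matching whose $A$-distribution is obtained from $d_F(A)$ by decreasing exactly one vertex by one, namely a maximum-degree vertex of $A$ reachable from $b$ along $F$-edges and backward $F$-alternating paths.

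For the second part, suppose toward a contradiction that $F'$ admits a forward $F'$-alternating path $R$ from $\alpha$ to $\beta$ with $d_{F'}(\alpha)-d_{F'}(\beta)\ge 2$. Since $F$ is minimum, Theorem~\ref{th:declinenov} forbids $R$ from being an $F$-alternating path of decline at least $2$, so any contradiction must arise from the few edges on which $F$ and $F'$ differ. I would argue as in the case analysis of Theorem~\ref{th:minimalen_F_i}. When $R$ avoids $b$ (hence the changed edges), it is genuinely $F$-alternating and only the degree drop at the reduced vertex matters: if that vertex is $\alpha$ the $F$-decline of $R$ is at least $3$, and if it is $\beta$ we obtain an $F$-alternating path from $\alpha$ to the reduced vertex of $F$-decline exactly $1$, whose reversal is a backward $F$-alternating path exhibiting a vertex of degree one larger. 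In the else branch this last situation is precisely a backward $F$-alternating path from $a\in A_b$ to a vertex of degree $d_F(a)+1$, contradicting the hypothesis under which the else branch was entered; in every other subcase it contradicts the minimality of $F$ directly.

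The genuinely delicate part, and the step I expect to be the main obstacle, is when $R$ meets $b$ or shares vertices with the rerouting path $P$. Here I would splice: take the vertex of $R$ common with $P$ that is closest along $P$ to $b$ (respectively closest along $R$ to an endpoint) and concatenate the matching pieces of $R$ and $P$, exactly as the paths $Q=b_{\ell}P_{i-1}vPa''$ and $R=a'Pv'P_{i-1}a$ were formed in cases B1--B3 of Theorem~\ref{th:minimalen_F_i}. Each splice yields an $F$-alternating path whose decline is bounded below by Lemma~\ref{stopnje}-style degree accounting. The crucial fact to be checked is that minimality of $F$ caps at $d_F(a)+1$ the degree of any vertex reachable from $a'$ by a backward $F$-alternating path: two chained backward paths of positive decline would compose into an $F$-alternating path of decline at least $2$, which is impossible.

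This cap is exactly what makes the whole argument close. It guarantees that decreasing $a''$ (of degree $d_F(a)+1$) in the if branch, or decreasing $a$ (of degree $d_F(a)$) in the else branch, is the decrease of a \emph{maximum}-degree reachable vertex and therefore cannot create a new path of decline $2$: any putative witness $R$ would, after splicing, reveal either an $F$-alternating path of decline at least $2$ or a backward $F$-alternating path forbidden by the branch condition. Once every case is eliminated, Theorem~\ref{th:declinenov} certifies that $F'$ is a minimum $g_b$-quasi-matching, completing the proof. I would also double-check the degenerate possibilities ($a'$ already a neighbour of $b$, $P$ passing through $b$, $R$ using the deleted edge as a non-$F$ edge), which are routine but must be recorded to cover all subcases of the splice.
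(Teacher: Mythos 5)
Your overall architecture is the same as the paper's: reduce everything to Theorem~\ref{th:declinenov}, split on the two branches of Algorithm~\ref{al:remove}, and derive a contradiction either with the minimality of $F$ or with the condition under which the branch was entered. Your degree bookkeeping in the first part is correct (the paper leaves it implicit), and your treatment of the \textbf{else} branch matches the paper exactly: a violating path must end at $a$, starting it in $A_b$ contradicts the maximality of $d_F(a)$, and starting it anywhere else produces precisely the backward $F$-alternating path whose nonexistence defines that branch.

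The genuine gap is in the \textbf{if} branch, which you yourself flag as ``the main obstacle'' but never discharge, and the missing idea is concrete: the paper takes $P$ to be a \emph{shortest} backward $F$-alternating path from $a'\in A_b$ to $a''$ with $d_F(a'')=d_F(a')+1$. Shortestness, combined with the absence of $F$-alternating paths of decline above $1$, forces $d_F(v)=d_F(a')$ for every $A$-vertex $v$ on $P\setminus\{a''\}$, so after the exchange \emph{all} $A$-vertices of $P$ have $F'$-degree $d_{F'}(a')$. This flatness is what anchors the splice: any violating $F'$-path $P'$ must intersect $P$ in an $A$-vertex, which pins $d_{F'}(\hat{a})\leq d_{F'}(a')+1$ and $d_{F'}(\tilde{a})\geq d_{F'}(a')-1$, hence forces the decline of $P'$ to be exactly $2$ with equalities throughout, and then one extracts an $F$-alternating path from $a''$ to $\tilde{a}$ of decline $2$, contradicting minimality of $F$. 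Without knowing the degrees along $P$ are flat, your ``Lemma~\ref{stopnje}-style accounting'' has nothing to bound the junction degrees with, and the splices need not yield the declines you claim. Separately, your justification of the degree cap by ``chaining'' two backward paths is unsound as stated: the concatenation of two alternating paths need not be a path, nor alternate at the junction vertex --- this is exactly why the splices in cases B1--B3 of Theorem~\ref{th:minimalen_F_i} choose their junction vertices carefully. Fortunately the cap you want ($d_F(a'')\leq d_F(a')+1$ for any $a''$ reachable from $a'$ by a backward $F$-alternating path) needs no chaining: it is Theorem~\ref{th:declinenov} applied to the reversed path. So the else branch of your plan is sound, but the if branch as sketched would not close without adding the shortest-path choice of $P$ and the resulting equal-degree structure along it.
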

\begin{proof}
By Theorem \ref{th:declinenov}, we need to prove that every
$F'$-alternating path has decline at most 1 in $G$. Note that every
$F$-alternating path has decline at most 1 in $G$, since $F$ is
minimum by assumption. There are two cases in the algorithm that we
deal with separately.

Suppose first there is no such backward $F$-alternating path $P$ in
$G$ from $a'\in A_b$ to $a''\in A$ with $d_{F}(a'')=d_{F}(a')+1$.
Then $F'=F-ab$, and note that $d_F(A)=d_{F'}(A)$ except in $a$ where
$d_{F'}(a)=d_F(a)-1$. Hence, if there is any $F'$-alternating path
with decline greater than 1, it ends in $a$. Now, no such violating
path could start with a vertex from $A_b$, since $a$ has the largest
$F$-degree among these vertices. And also, no such violating path
could start in any other vertex $a''$ of $A$, because that would mean
there is a backward $F$-alternating path in $G$ from $a\in A_b$ to
$a''\in A$ with $d_{F}(a'')=d_{F}(a')+1$, contrary to our
assumption.

Secondly, suppose there exists a backward $F$-alternating path in
$G$ from $a'\in A_b$ to $a''\in A$ with $d_{F}(a'')=d_{F}(a')+1$,
and let $P$ be a shortest such path. Then $F'=F\oplus P-a'b$, and we
have $d_F(A)=d_{F'}(A)$ except in $a''$ where
$d_{F'}(a'')=d_F(a'')-1$. By the choice of $P$ and the fact that
there are no $F$-alternating paths with decline more than one, we
infer that $d_F(v)=d_F(a')$ for all vertices $v\in A$ on
$P\setminus\{a''\}$. Hence, for all vertices $v\in A$ on $P$ ($a''$
included), we have $d_{F'}(v)=d_{F'}(a')$.  For the purpose of
contradiction let us suppose there is a violating $F'$-alternating
path $P'$ from $\hat{a}$ to $\tilde{a}$. Since $F'$ and $F$ differ
only on $P$, we infer that $P'$ must intersect $P$ in some vertex of
$A$. This readily implies that $d_{F'}(\hat{a})\leq d_{F'}(a')+1$
and $d_{F'}(\tilde{a})\geq d_{F'}(a')-1$. Since $P'$ is violating, we
infer that in fact $d_{F'}(\hat{a})=d_{F'}(a')+1$ and
$d_{F'}(\tilde{a})=d_{F'}(a')-1$ so that the decline of $P'$ with
respect to $F'$ is exactly 2. Now, we can easily find that there is
an $F$-alternating path from ${a''}$ to $\tilde{a}$ in $G$ whose
decline equals 2, which is a contradiction with $F$ being a minimum
$g$-quasi-matching.
\end{proof}

From Theorem \ref{th:removal} and previous discussion, we infer that
the augmented Hungarian method presented in this paper can be
applied to the on-line problem of constructing an optimal
quasi-matching of $B$ with the set $A$ fixed, when the vertices of
$B$ either appear or disappear one at a time. Each on-line step
assures optimality of the current quasi-matching in $O(g(v)|E(G)|)$
steps. Moreover, a similar approach could be used for on-line
setting, where the vertices of $A$ can appear or disappear. When a
vertex of $A$ of $F$-degree $d$ is removed, its $F$-neighbors from
$B$ loose the degree with respect to a quasi-matching, which can be
iteratively recovered, resulting in a patching algorithm of
complexity $O(d|E(G)|)$. On the other hand, when an $A$-vertex of
$G$-degree $d$ is added, up to $d$ vertices can be assigned to it,
again resulting in a $O(d|E(G)|)$ algorithm per on-line step. These
(rather technical) issues are treated in greater detail in a sequel
paper \cite{protocol}, which is oriented towards the mentioned
application.

Note that our adaptation of Hungarian method is, when reduced to
semi-matchings and only addition of $b$-vertices, the same as in
\cite{hllt}. However, our proof of correctness differs in that we
explicitly maintain minimality of the constructed semi-matching (in
fact, even an arbitrary $g$-quasi-matching), after each addition (or
removal) of a vertex. Furthermore, the set of possible alternating
paths with decline at least two is in our approach narrowed to the
vertex that is added to or removed from the graph, resulting in an
efficient on-line version of the algorithm.


\section{Generalized Hall's marriage theorem}
\label{sc:hall}

In this section, we present a solution to Problem \ref{pb:minDegGen}
by characterizing bipartite graphs $A+B$ with given $f\colon A
\rightarrow \NN$ and $g\colon B \rightarrow \NN$ that admit an
$f,g$-quasi-matching. The result is a vast generalization of Hall's
theorem.

A {\em network} $N=(V,A)$ is a digraph with a nonnegative capacity
$c(e)$ on each edge $e$, and with two distinguished vertices: {\em
source} $s$ and {\em sink} $t$ (usually, $s$ has only outgoing, and
$t$ has only ingoing arcs). A {\em flow} $g$ assigns a value $fl(e)$
to each edge $e$. A flow $fl$ is {\em feasible} if for each edge
$e$, $0\leq fl(e)\leq c(e)$ and the {\em conservation (Kirchhoff's)
law} is fulfilled: for every vertex $v\in V(N)\setminus \{s,t\}$,
$$\sum_{vx\in A(N)}{fl(vx)}=\sum_{xv\in A(N)}{fl(xv)}.$$
The {\em value} of a flow $fl$ is $\sum_{sx\in A(N)} fl(sx)$, which
is equal to $\sum_{xt\in A(N)} fl(xt)$. The famous Ford-Fulkerson
(or max-flow min-cut) theorem states that the maximum value of a
feasible flow in $N$ coincides with the minimum capacity of a cut in
$N$. (Where {\em cut} is the set of arcs from $S$ to $T$ in a $S,T$
partition of $N$ (i.e. $s\in S,t\in T$), and its {\em capacity} is
the sum of the $c$-values of its edges). More on this well-known
problem and theorem can be found for instance in
\cite{lp-86,west-01}. One of the several proofs of the famous Hall's
marriage theorem uses the max-flow min-cut theorem, and in our
generalization of Hall's theorem, we will follow similar lines.

\begin{definition}
Let $G=A+B$ be a bipartite graph, $f\colon A\rightarrow \NN$ an
availability function, and $Y\subseteq B$. For $x\in A$, let
$d_Y(x)=|\{y\in Y\,:\,xy\in E(G)\}|$, that is the number of
neighbors of $x$ from $Y$. For $X\subset A$, let $f(X,Y)=\sum_{x\in
X}{\min\{f(x),d_Y(x)\}}$ denote the {\em relative availability} of
$X$ with respect to $f$ and $Y$. In particular, for $x\in X$, we
write $f(\{x\},Y)$ as $f(x,Y)$ (which is the least of $f(x)$ and
$d_Y(x)$).
\end{definition}

Intuitively, the relative availability of $X$ with respect to $f$
and $Y$ presents the maximum number of edges going from $X$ that can
be used to cover $Y$.

\begin{theorem}
\label{THall} Let $G=A+B$ be a bipartite graph, with
$A=\lset{a_1}{a_m}$, $B=\lset{b_1}{b_n}$,  a mapping $f\colon A
\rightarrow \NN$, and $g\colon B \rightarrow \NN$. Then $G$ has an
$f,g$-quasi-matching of $A+B$ if and only if for every $Y\subseteq
B$,
\begin{equation}
\label{Ehall}
f(N(Y),Y)\geq g(Y).
\end{equation}
\end{theorem}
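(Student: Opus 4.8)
The plan is to follow the max-flow min-cut route, exactly as the classical proof of Hall's theorem does. First I would build a network $N$ from $G$: add a source $s$ and a sink $t$, an arc $s\to b$ of capacity $g(b)$ for every $b\in B$, an arc $b\to a$ of capacity $1$ for every edge $ab\in E(G)$ (oriented from the $B$-side to the $A$-side), and an arc $a\to t$ of capacity $f(a)$ for every $a\in A$. Since all capacities are integral, the augmenting-path method behind the Ford--Fulkerson theorem produces an integral maximum flow, and the first step is to record the dictionary between integral flows and $f,g$-quasi-matchings.

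The total capacity leaving $s$ is $g(B)$, so every flow has value at most $g(B)$, and the key claim is that $G$ admits an $f,g$-quasi-matching if and only if the maximum flow in $N$ equals $g(B)$. For one direction, an integral flow of value $g(B)$ must saturate every source arc, so the set $F$ of edges $ab$ carrying one unit of flow satisfies $d_F(b)=g(b)$ for every $b\in B$ and $d_F(a)\le f(a)$ for every $a\in A$, hence is an $f,g$-quasi-matching. Conversely, given any $f,g$-quasi-matching I would first trim it so that each $b\in B$ keeps exactly $g(b)$ incident edges; deleting edges only lowers the $A$-degrees, so the capacity constraint $d_F(a)\le f(a)$ is preserved, and routing one unit of flow along each retained edge yields a feasible integral flow of value $g(B)$.

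With this reduction in hand the question becomes when the maximum flow equals $g(B)$, which by the max-flow min-cut theorem happens if and only if every $s,t$-cut has capacity at least $g(B)$. I would parametrise a cut by the induced partitions $B=Y\sqcup(B\setminus Y)$ with $Y=B\cap S$ and $A=A_S\sqcup A_T$. Reading off which arcs cross from $S$ to $T$, the capacity of the cut is
\[
g(B\setminus Y)+e(Y,A_T)+f(A_S),
\]
where $e(Y,A_T)$ denotes the number of edges of $G$ between $Y$ and $A_T$. Subtracting $g(B)=g(Y)+g(B\setminus Y)$, the cut has capacity at least $g(B)$ precisely when $e(Y,A_T)+f(A_S)\ge g(Y)$.

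The heart of the argument, and the step I expect to be the main obstacle, is recognising that minimising the left-hand side over the choice of $A$-partition reproduces the relative-availability expression $f(N(Y),Y)$. For a fixed $Y$ each vertex $a\in A$ may be placed in $A_S$, where it contributes $f(a)$, or in $A_T$, where it contributes its number $d_Y(a)$ of neighbours in $Y$; the cheaper placement contributes $\min\{f(a),d_Y(a)\}$, and a vertex outside $N(Y)$ contributes $0$. Summing over $a$ gives exactly $f(N(Y),Y)$, so the minimum cut is at least $g(B)$ if and only if $f(N(Y),Y)\ge g(Y)$ for every $Y\subseteq B$. Combined with the flow--quasi-matching correspondence established above, this is precisely the asserted characterisation, and I would take care throughout to treat the two trivial partitions ($A_S=\emptyset$ and $A_S=A$) and the empty set $Y=\emptyset$ as degenerate but consistent cases.
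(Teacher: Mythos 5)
Your proof is correct, and it runs on the same engine as the paper's: the max-flow min-cut theorem applied to essentially the same auxiliary network (yours oriented $s\to B\to A\to t$ with capacities $g(b)$, $1$, $f(a)$; the paper's mirror image $a\to A\to B\to b$ with capacities $f(a_i)$, $1$, $g(b_j)$ --- the reversal is immaterial). The execution differs in two respects. First, the paper proves necessity separately and combinatorially: if $f(N(Y),Y)<g(Y)$ for some $Y$, a pigeonhole count shows any $g$-quasi-matching forces some $u\in N(Y)$ with $d_F(u)>f(u)$. You instead extract both directions from a single cut computation, parametrizing every cut by $Y=B\cap S$ and the partition $A=A_S\sqcup A_T$, and noting that the per-vertex choice $\min\{f(a),d_Y(a)\}$ makes the cheapest cut with fixed $Y$ have capacity exactly $g(B\setminus Y)+f(N(Y),Y)$; this identifies the minimum cut in closed form, namely $\min_{Y\subseteq B}\bigl(g(B\setminus Y)+f(N(Y),Y)\bigr)$, whereas the paper only lower-bounds a given minimum cut $C$ by splitting $N(Y)$ into $K=\{a_i : aa_i\in C\}$ and $L=N(Y)\setminus K$ and estimating $|C|\ge g(Z)+f(K,Y)+f(L,Y)\ge g(B)$. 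Your exact-minimization version is a bit cleaner and arguably more informative. Second, you spell out the dictionary between integral flows of value $g(B)$ and $f,g$-quasi-matchings --- the appeal to integrality of the maximum flow, the saturation of all source arcs, and the trimming of a quasi-matching so each $b\in B$ keeps exactly $g(b)$ edges (observing that this only lowers $A$-degrees) --- which the paper compresses into a single unproved sentence. Neither difference changes the underlying approach, but both make your write-up more self-contained than the original.
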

\begin{proof}
Suppose there is a subset $Y\subset B$ such that $\sum_{u\in
N(Y)}{f(u,Y)}=f(N(Y),Y)< g(Y)=\sum_{v\in Y}{g(v)}$. Let $F$ be an
arbitrary $g$-quasi-matching of $B$ in $G$. The vertices of $Y$
altogether must have at least $g(Y)$ $F$-neighbors. As the relative
availability of their neighbors $N(Y)$ is less than $g(Y)$, we
derive by the pigeon-hole principle that there will be a vertex
$u\in N(Y)$ such that $d_F(u)>f(u)$. Hence $F$ is not an
$f,g$-quasi-matching, which readily implies (since $F$ was
arbitrarily chosen) that no $f,g$-quasi-matching exists.

For the converse, let $f(N(Y),Y)\geq g(Y)$ hold for all $Y\subseteq
B$. We introduce two additional vertices: $a$ that is connected to
all vertices $a_i\in A$, and $b$, connected to all $b_j\in B$.
Construct a digraph $G'$, by choosing a direction of all edges from
$G$ as follows: from $a$ to each $a_i\in A$, from vertices of $A$ to
their neighbors in $B$, and from each $b_j$ to $b$. Next, construct
a network out of the digraph $G'$, by setting flow capacities
$c:E(G')\rightarrow \NN$  as follows: $c(aa_i)=f(a_i)$,
$c(a_ib_j)=1$ (for $a_ib_j\in E(G)$), and $c(b_jb)=g(b_j)$. Note
that there exists a flow of size $g(B)$ in $G'$ if and only if there
exists an $f,g$-quasi-matching of $A+B$. By max-flow min-cut
theorem, the maximum flow value coincides with the minimum cut
capacity in the network $G'$.

Let $C$ be a minimum cut in the network, and let $Z$
be the set of vertices from $B$ for which $b_jb\in C$.
Let $Y=B\setminus Z$. Since $C$ is a cut, for every
vertex $b_j\in Y$ and every neighbor $a_i$ of $b_j$, we
have either $a_ib_j\in C$ or $aa_i\in C$ (since $C$ is
minimum, we may assume that both does not happen).
Denote by $K$ the set of vertices $a_i$ from $N(Y)$
such that $aa_i\in C$ and let $L=N(Y)\setminus K$.
For $b_j\in Y$, let $m_j$ denote the number of its
neighbors in $L$ (which coincides with the number of its
incident edges that are from $C$). Note that
$$\sum_{j,b_j\in Y} {m_j}=\sum_{a_i\in L}{d_Y(a_i)}\geq f(L,Y).$$
Now,
\begin{eqnarray*}
|C| & = & g(Z)+f(K)+\sum_{j,b_j\in Y}{m_j}\\
    & \geq & g(Z)+f(K,Y)+f(L,Y)\\
    & \geq & g(Z)+f(N(Y),Y) \\
    & \geq & g(Z)+g(Y)=g(B) \\
\end{eqnarray*}
where in the last inequality \eqref{Ehall} is used. 
The result now readily follows.
\end{proof}

The theorem has several corollaries. We state the most obvious.
First, if $f$ is not involved, i.e. if $f(u)=d(u)$ for all $u\in A$,
then $f(N(Y),Y)=\sum_{u\in N(Y)}{d_Y(u)}=\sum_{v\in Y}{d(v)}$, and
\eqref{Ehall} turns into a much simpler condition 
$\sum_{v\in Y}{d(v)}\geq g(Y)$ for
every $Y\subseteq B$.

If we want that each vertex in $A$ covers only one vertex from $B$,
that is $f(u)=1$ for all $u\in A$, we get $f(N(Y),Y)=\sum_{u\in
N(Y)}{1}=|N(Y)|$, and the condition \eqref{Ehall} reads $|N(Y)|\geq
g(Y)$ for every $Y\subseteq B$. If, in addition, $g(v)=1$ for all
$v\in B$, we get $|N(Y)|\geq |Y|$ for all $Y\subseteq B$ which is
exactly Hall's condition. On the other hand, this implies that $A+B$
has a perfect matching of vertices from $B$. Thus Hall's theorem is
a corollary of Theorem \ref{THall}.

One of the common formulations of Hall's theorem is in terms of
systems of distinct representatives. Let us formulate also Theorem
\ref{THall} in this sense.

Let ${\cal A}=\{A_1,\ldots, A_m\}$ be a family of sets, with
$S=\cup_{i=1}^m{A_i}=\{b_1,\ldots,b_n\}$, and let there be mappings
$f\colon {\cal A} \rightarrow \NN$, and $g\colon S \rightarrow \NN$.
We say that the family {\cal A} has a {\em (lower) system of
$f,g$-representatives} if to every set $A_i\in \cal A$ we associate
at most $f(A_i)$ representatives from $S$, and every vertex $b_j\in
S$ is a representative of at least $g(b_j)$ sets from $\cal A$. In
this terminology, Theorem \ref{THall} reads as follows.

\begin{corollary}
A family of sets $\cal A$ has a lower system of
$f,g$-representatives if and only if for every subset $Y\subseteq S$
we have
$$\sum_{A_i\in {\cal A}} {\min\{f(A_i),|A_i\cap Y|\}}\geq \sum_{b_j\in Y}{g(b_j)}.$$

\end{corollary}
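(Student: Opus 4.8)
The plan is to derive this corollary directly from Theorem \ref{THall} by a standard translation between families of sets and bipartite graphs, rather than reproving anything via flows. Concretely, I would first build the bipartite graph $G=A+B$ that encodes the set system: let $A=\{a_1,\ldots,a_m\}$ be one vertex per set in $\cal A$, let $B=\{b_1,\ldots,b_n\}$ be one vertex per element of $S=\cup_i A_i$, and place an edge $a_ib_j\in E(G)$ exactly when $b_j\in A_i$. The mappings $f\colon\cal A\rightarrow\NN$ and $g\colon S\rightarrow\NN$ transfer verbatim to an availability function $f\colon A\rightarrow\NN$ and a need function $g\colon B\rightarrow\NN$ under this identification.

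The main conceptual step is to check that, under this dictionary, an $f,g$-quasi-matching of $A+B$ corresponds exactly to a lower system of $f,g$-representatives, and that the two availability conditions coincide. For the first correspondence: a set $F\subseteq E(G)$ in which each $a_i$ has at most $f(a_i)$ incident edges and each $b_j$ has at least $g(b_j)$ incident edges is precisely an assignment in which set $A_i$ is given at most $f(A_i)$ representatives (its $F$-neighbors in $B$) and each element $b_j$ represents at least $g(b_j)$ sets. For the availability condition I would note that, under the edge rule above, the neighbor-count $d_Y(a_i)=|\{b_j\in Y:a_ib_j\in E(G)\}|$ equals $|A_i\cap Y|$ for any $Y\subseteq S$, so that
\[
f(N(Y),Y)=\sum_{a_i\in N(Y)}\min\{f(a_i),d_Y(a_i)\}=\sum_{A_i\in{\cal A}}\min\{f(A_i),|A_i\cap Y|\}.
\]
Here I am using that sets $A_i$ disjoint from $Y$ contribute $\min\{f(A_i),0\}=0$, so extending the sum from $N(Y)$ to all of $\cal A$ changes nothing; this small bookkeeping point is the only place where a little care is needed. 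Likewise $g(Y)=\sum_{b_j\in Y}g(b_j)$ by definition of the need of a set.

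With these identifications in hand, the displayed inequality in the corollary is literally condition \eqref{Ehall} of Theorem \ref{THall}, and the existence of a lower system of $f,g$-representatives is literally the existence of an $f,g$-quasi-matching of $A+B$. Applying Theorem \ref{THall} then yields the equivalence immediately. I do not expect any genuine obstacle here: the entire content is already carried by Theorem \ref{THall}, and the corollary is a reformulation. The only thing to verify carefully is that restricting the sum to $N(Y)$ versus summing over all of $\cal A$ gives the same value, which I addressed above, and that the hypothesis $S=\cup_i A_i$ guarantees every element of $B$ is reachable (so that isolated $B$-vertices with positive need, which would make the condition fail on their own singleton, do not arise spuriously).
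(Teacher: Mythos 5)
Your proof is correct and takes exactly the route the paper intends: the paper presents this corollary as a direct restatement of Theorem \ref{THall} under the standard incidence-graph encoding of a set system (vertices $a_i$ for sets, $b_j$ for elements, edges for membership), which is precisely the translation you make explicit, including the only delicate bookkeeping point that sets $A_i$ disjoint from $Y$ contribute $\min\{f(A_i),0\}=0$, so summing over all of ${\cal A}$ rather than over $N(Y)$ changes nothing. No gaps; nothing further is needed.
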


By duality, since the interpretation of the roles of sets and
vertices in Theorem \ref{THall} can be reversed, we have another
corollary expressed in similar terms. Let ${\cal B}=\{B_1,\ldots,
B_n\}$ be a family of sets, with
$S=\cup_{j=1}^n{B_j}=\{a_1,\ldots,a_m\}$, and let there be mappings
$f\colon {S} \rightarrow \NN$, and $g\colon {\cal B} \rightarrow
\NN$. We say that the family ${\cal B}$ has {\em an upper system of
$f,g$-representatives} if to every set $B_j\in \cal B$, we associate
at least $g(B_j)$ representatives from $S$, and every vertex $a_i\in
S$ is a representative of at most $f(a_i)$ sets from $\cal B$. In
this terminology, we infer from Theorem \ref{THall}:

\begin{corollary}
A family of sets $\cal B$ has an upper system of
$f,g$-representatives if and only if for every subfamily $Y\subseteq
\cal B$ we have
$$\sum_{a_i\in S} {\min\{f(a_i),|Y(a_i)|\}}\geq \sum_{B_j\in Y}{g(B_j)},$$
\noindent where $Y(a_i)=\{B_j\in Y \,\colon\, a_i\in B_j\}$ (i.e.
$|Y(a_i)|$ is the number of sets from the family $Y$ that contain
$a_i$).
\end{corollary}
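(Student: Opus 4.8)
The plan is to recognize this corollary as a direct relabelling of Theorem \ref{THall} in which the two sides of the bipartition play the opposite roles to those in the preceding (lower-system) corollary, and then to read off the equivalence from Theorem \ref{THall} applied to an appropriately built bipartite graph. First I would construct $G=A+B$ by putting the elements on one side and the sets on the other: set $A=S=\{a_1,\ldots,a_m\}$ and equip it with the capacity function $f$, set $B={\cal B}=\{B_1,\ldots,B_n\}$ and equip it with the need function $g$, and join $a_i$ to $B_j$ by an edge exactly when $a_i\in B_j$. This assignment is the crucial choice, since in the upper-system setting the sets carry the lower bound $g$ and the elements carry the upper bound $f$, which is the reverse of the lower-system corollary.

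Next I would verify that an upper system of $f,g$-representatives of ${\cal B}$ is precisely the same object as an $f,g$-quasi-matching of $A+B$ in $G$. A set of edges $F\subseteq E(G)$ amounts to declaring, for each incidence $a_i\in B_j$, whether $a_i$ is a representative of $B_j$. The demand that every $B_j$ receive at least $g(B_j)$ representatives is exactly the lower bound $d_F(B_j)\geq g(B_j)$ on the $B$-side, and the demand that each $a_i$ represent at most $f(a_i)$ sets is exactly the upper bound $d_F(a_i)\leq f(a_i)$ on the $A$-side. Hence an upper system exists if and only if $G$ admits an $f,g$-quasi-matching of $A+B$.

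It then remains to translate condition \eqref{Ehall} into the language of the statement. For a subfamily $Y\subseteq B={\cal B}$, the neighborhood $N(Y)$ consists of those elements lying in at least one set of $Y$, and $d_Y(a_i)$ counts the sets of $Y$ containing $a_i$, which is exactly $|Y(a_i)|$. Thus $f(N(Y),Y)=\sum_{a_i\in N(Y)}\min\{f(a_i),|Y(a_i)|\}$. Since $a_i\notin N(Y)$ forces $|Y(a_i)|=0$ and therefore $\min\{f(a_i),|Y(a_i)|\}=0$, the summation may be extended over all of $S$ without changing its value, giving $f(N(Y),Y)=\sum_{a_i\in S}\min\{f(a_i),|Y(a_i)|\}$. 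As $g(Y)=\sum_{B_j\in Y}g(B_j)$ by definition, condition \eqref{Ehall} becomes precisely the displayed inequality, and applying Theorem \ref{THall} to $G$ yields both directions of the equivalence at once.

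I do not expect a genuine obstacle, as the statement is only a reformulation of Theorem \ref{THall}; the single point demanding care is the bookkeeping of which mapping bounds which side of the bipartition, which differs from the lower-system corollary, together with the harmless but necessary observation that the sum over $N(Y)$ agrees with the sum over all of $S$.
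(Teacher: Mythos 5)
Your proposal is correct and follows exactly the route the paper intends: the paper states this corollary as an immediate consequence of Theorem \ref{THall} obtained by placing the elements of $S$ on the $A$-side (with capacities $f$) and the sets of $\cal B$ on the $B$-side (with needs $g$) of the incidence bipartite graph, which is precisely your construction. You merely spell out the routine details the paper leaves implicit, including the correct observation that extending the sum from $N(Y)$ to all of $S$ changes nothing since $\min\{f(a_i),|Y(a_i)|\}=0$ for $a_i\notin N(Y)$.
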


From the above corollaries, one can easily find formulations when one
or both of the mappings $f,g$ is not involved or is constant (say,
equal to 1). The resulting formulations are mostly easier and nicer
as the above and could also be applicable.

\section*{Acknowledgement}

We thank to Matja\v z Kov\v se for fruitful discussions during the preparation of this paper.

\end{document}